 \def\@@and{y}
\title[Grupos magn\'eticos finitos]{Grupos magn\'eticos finitos y sus representaciones}
\author{Jos\'e Cantarero}
\thanks{}
\address{
\hfill\break Centro de Investigaci\'on en Matem\'aticas, A.C. Unidad M\'erida \\
\hfill\break Parque Cient\'ifico y Tecnol\'ogico de Yucat\'an \\ 
\hfill\break Carretera Sierra Papacal--Chuburn\'a Puerto Km 5.5 \\
\hfill\break Sierra Papacal, M\'erida, YUC 97302 \\
\hfill\break Mexico.
\hfill\break {\emph{Email address: }}{\tt cantarero@cimat.mx}}
\author{Higinio Serrano Garcia}
\thanks{}
\address{
\hfill\break Institute of Mathematics and Informatics \\
\hfill\break Bulgarian Academy of Sciences \\ 
\hfill\break Acad. Georgi Bonchev Str., Block 8 \\
\hfill\break 1113 Sofia \\
\hfill\break Bulgaria.
\hfill\break {\emph{Email address: }}{\tt hserrano@math.cinvestav.mx}}
\theoremstyle{plain}
\newtheorem{theorem}{Teorema}[section]
\newtheorem{proposition}[theorem]{Proposici\'on}
\newtheorem{corollary}[theorem]{Corolario}
\newtheorem{lemma}[theorem]{Lema}
\theoremstyle{definition}
\newtheorem{definition}[theorem]{Definici\'on}
\newtheorem{remark}[theorem]{Observaci\'on}
\newtheorem{example}[theorem]{Ejemplo}
\keywords{Grupos magn\'eticos, teor\'ia de representaciones, correpresentaciones}
\subjclass{20C35, 81R05}
\begin{document}

\begin{abstract}
En este art\'iculo damos una introducci\'on elemental a la teor\'ia de representaciones de grupos magn\'eticos
finitos desde un punto de vista puramente matem\'atico.
\end{abstract}

\maketitle

\section{Introducci\'on}

Una de las \'areas m\'as grandes e importantes dentro del campo de la f\'isica es la llamada \emph{materia condensada y f\'isica del estado s\'olido}, que a grandes rasgos estudia sistemas mec\'anico-cu\'anticos con una gran cantidad de part\'iculas que interact\'uan entre s\'i. La alta complejidad de los sistemas que esta disciplina estudia se puede reducir gradualmente al usar una poderosa herramienta: la \emph{simetr\'ia}. Dentro del cat\'alogo de las simetr\'ias, que aparecen en el contexto de la materia condensada, est\'an los \emph{grupos cristalogr\'aficos magn\'eticos}; grupos que contienen traslaciones, rotaciones, reflexiones e inversiones temporales. Estos grupos han sido ampliamente estudiados y clasificados en su totalidad desde finales del siglo XIX. Evgraf Fedorov \cite{fedorov1971symmetry} realiz\'o la primera clasificaci\'on de grupos cristalogr\'aficos que no contienen inversi\'on temporal. Posteriormente, Shubnikov y Belov \cite{ShubnikovBelov1964} incorporaron las inversiones temporales. Para una descripci\'on m\'as actual se puede consultar \cite{bradley2010mathematical} o \cite{Litvin2013}.

Estudiar los grupos cristalogr\'aficos magn\'eticos y sus representaciones ha sido parte fundamental en la predicci\'on de nuevas fases topol\'ogicas de la materia tales como aislantes topol\'ogicos \cite{RevModPhys.82.3045}, materiales altermagn\'eticos \cite{PhysRevB.111.085127}, aislantes de Chern \cite{cherninsulators}, etc. La principal diferencia con los grupos usuales y sus representaciones es que las representaciones de grupos magn\'eticos incorporan operadores tanto lineales como antilineales. Por ello es recomendable, m\'as no indispensable, que el lector haya tenido un contacto con la teor\'ia de representaciones de grupos finitos \cite{FultonHarris1991} pues as\'i notar\'a las similitudes y diferencias de las teor\'ias.

La intenci\'on de este art\'iculo es dar una introducci\'on moderna a la teor\'ia de representaciones de grupos magn\'eticos finitos, iniciada por E. Wigner \cite{wigner}, poniendo especial \'enfasis en resultados claves como el lema de Schur (lema \ref{lemadeschur}) y la clasificaci\'on de representaciones irreducibles (teorema \ref{claswigner}). El enfoque es completamente matem\'atico, y las conexiones e interpretaciones f\'isicas mencionadas en el texto no son necesarias para comprender el contenido. Con el objetivo de hacer este tema m\'as accesible, el escrito se formula principalmente en t\'erminos de \'algebra lineal y teor\'ia de grupos, con breves menciones a conceptos categ\'oricos que, si bien no son esenciales, pueden resultar \'utiles para el lector familiarizado con ellos.
\vspace{0.25cm}

\noindent \begin{bf}Notaci\'on\end{bf}: Denotaremos a la clase del entero $k$ en $\mathbb{Z}/n$ simplemente como $k$. Tambi\'en usaremos $\mathrm{id}_X$ para denotar la funci\'on identidad $X \to X$ o solamente $\mathrm{id}$ cuando $X$ se pueda deducir
del contexto.
\vspace{0.25cm}

\noindent \begin{bf}Agradecimientos\end{bf}: HS agradece el apoyo de la Simons Foundation, apoyo SFI-MPS-T-Institutes-00007697, y al Ministerio de Educaci\'on y Ciencia de la Rep\'ublica de Bulgaria, apoyo DO1-239/10.12.2024. La elaboraci\'on de este art\'iculo fue apoyada por CONAHCYT (ahora Secihti) en el a\~no 2023 mediante el proyecto de Ciencia de Frontera CF-2023-I-2649.

\section{Grupos magn\'eticos}

Consideremos la extensi\'on de campos
\begin{equation*}
    \xymatrix{\mathbb{C} \ar@{-}[d] \\ \mathbb{R} }   
\end{equation*}
y denotemos al grupo de automorfismos de dicha extensi\'on por 
\begin{equation*}
    \mathrm{Aut}_{\mathbb{R}}(\mathbb{C})=\left\{f \colon \mathbb{C}\longrightarrow\mathbb{C}\,\Bigg|\, \begin{array}{c}
         f \text{ es un isomorfismo de campos,} \\ f(x)=x \text{ para todo } x\in \mathbb{R}
    \end{array} \right\}.    
\end{equation*}
Este grupo resulta ser isomorfo a $\mathbb{Z}/2$ como veremos a continuaci\'on.

\begin{proposition}
    El grupo de automorfismos de la extensi\'on de campos $\mathbb{C}/\mathbb{R}$ consta de dos elementos, concretamente
    \begin{equation*}
        \mathrm{Aut}_{\mathbb{R}}(\mathbb{C})=\{\mathrm{id}, \mathbb{K}\},
    \end{equation*}
    donde $\mathbb{K}$ es el automorfismo definido por la conjugaci\'on compleja. En particular, $\mathrm{Aut}_{\mathbb{R}}(\mathbb{C})$ es isomorfo a $\mathbb{Z}/2$.
\end{proposition}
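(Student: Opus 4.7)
El plan es aprovechar que $\mathbb{C}=\mathbb{R}[i]$, de modo que todo $\mathbb{R}$-automorfismo $f\in\mathrm{Aut}_{\mathbb{R}}(\mathbb{C})$ queda completamente determinado por su valor en $i$. Primero observar\'ia que, como $-1\in\mathbb{R}$ est\'a fijo por hip\'otesis y $f$ respeta el producto, se cumple $f(i)^2=f(i^2)=f(-1)=-1$. Por tanto $f(i)$ es ra\'iz del polinomio $X^2+1$ en $\mathbb{C}$, cuyas \'unicas ra\'ices son $i$ y $-i$, as\'i que $f(i)\in\{i,-i\}$.

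A continuaci\'on, para un $z=a+bi$ arbitrario con $a,b\in\mathbb{R}$, la aditividad y multiplicatividad de $f$ junto con la igualdad $f|_{\mathbb{R}}=\mathrm{id}_{\mathbb{R}}$ dan $f(z)=a+b\cdot f(i)$. El caso $f(i)=i$ produce entonces $f=\mathrm{id}$, y el caso $f(i)=-i$ produce $f=\mathbb{K}$; esto establece la contenci\'on no trivial. El rec\'iproco exige verificar que $\mathrm{id}$ y $\mathbb{K}$ son efectivamente elementos de $\mathrm{Aut}_{\mathbb{R}}(\mathbb{C})$, lo cual es inmediato en el primer caso y se sigue de las propiedades est\'andar $\overline{z+w}=\bar z+\bar w$, $\overline{zw}=\bar z\,\bar w$ y $\bar z=z$ para $z\in\mathbb{R}$ en el segundo.

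Para la \'ultima afirmaci\'on basta notar que $\mathbb{K}^2=\mathrm{id}$, de modo que $\mathrm{Aut}_{\mathbb{R}}(\mathbb{C})$ es un grupo de orden 2 y, por ende, isomorfo a $\mathbb{Z}/2$. No anticipo ninguna dificultad seria: el \'unico punto que requiere cuidado es justificar la unicidad de la representaci\'on $z=a+bi$ con $a,b\in\mathbb{R}$, pero esto es una propiedad est\'andar de $\mathbb{C}$ como $\mathbb{R}$-espacio vectorial de dimensi\'on 2 con base $\{1,i\}$.
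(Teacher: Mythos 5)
Tu propuesta es correcta y sigue esencialmente el mismo camino que la demostraci\'on del art\'iculo: usar que $f$ queda determinado por $f(i)$, deducir $f(i)=\pm i$ de $f(i)^2=f(-1)=-1$, y concluir que las \'unicas opciones son $\mathrm{id}$ y $\mathbb{K}$ con $\mathbb{K}^2=\mathrm{id}$. La \'unica diferencia es que t\'u verificas expl\'icitamente que $\mathbb{K}$ es un automorfismo, detalle que el art\'iculo da por sabido.
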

\begin{proof}
    Dado $f\in \mathrm{Aut}_{\mathbb{R}}(\mathbb{C})$ y $z=x+iy$, se tiene 
    \begin{align*}
        f(z) & = f(x+iy) \\ &= f(x)+f(iy) \\ & = x+f(i)y.
    \end{align*}
    Vemos que $f$ queda totalmente determinado por su valor en $i$. Este valor lo podemos calcular de la igualdad
    \begin{align*}
        f(i)^2=f(i^2)=f(-1)=-1,
    \end{align*}
    de donde se sigue que $f(i)=\pm i$. Si $f(i)=i$, entonces $f$ es la identidad y si $f(i)=-i$, entonces $f=\mathbb{K}$. Por \'ultimo, notemos que conjugar dos veces produce la identidad, es decir, $\mathbb{K}^2=\mathrm{id}$.
\end{proof}

El grupo de automorfismos de la extensi\'on $\mathbb{C}/\mathbb{R}$ nos ayudar\'a a codificar la forma en la que las simetr\'ias magn\'eticas actuar\'an en los espacios vectoriales. Algunos elementos del grupo ser\'an operadores lineales mientras que otros ser\'an antilineales. 

\begin{definition}
Un \textbf{grupo magn\'etico} es una pareja $(G,\phi)$ donde $G$ es un grupo y $\phi \colon G \longrightarrow \mathbb{Z}/2$ es un epimorfismo de grupos.
\end{definition}
Al n\'ucleo del homomorfismo $\phi$ lo denotaremos por $G_0$. Esta informaci\'on la resumiremos con la siguiente sucesi\'on exacta:
\[ 1 \longrightarrow G_0 \longrightarrow G \stackrel{\phi}{\longrightarrow} \mathbb{Z}/2 \longrightarrow 1, \]
la cual tambi\'en se conoce como una extensi\'on de grupos. En general, cualquier epimorfismo $f \colon G \to Q$ de grupos define una extensi\'on de grupos
\[ 1 \longrightarrow \text{Ker}(f) \longrightarrow G \stackrel{f}{\longrightarrow} Q \longrightarrow 1. \]
donde el morfismo $\text{Ker}(f)\to G$ es la inclusi\'on. Tambi\'en es posible construir una extensi\'on de grupos a partir de dos grupos $N$, $Q$, y un homomorfismo $h \colon Q \to \text{Aut}(N)$. El producto semidirecto $ N \rtimes_h Q$ es el producto cartesiano $N \times Q$ equipado con el producto
\[ (n,q) (m,p) = (n \cdot h(q)(m),q \cdot p), \]
que lo convierte en un grupo. Es com\'un referirse a $h(q)(m)$ como la acci\'on de $q$ sobre $m$ pues define una acci\'on de $Q$ sobre $N$. Existe un epimorfismo $\pi \colon N \rtimes_h Q \to Q$ que env\'ia $(n,q)$ a $q$, cuyo n\'ucleo es isomorfo a $N$, por lo que da lugar a una sucesi\'on exacta
\[ 1 \longrightarrow N \longrightarrow N \rtimes_h Q \stackrel{\pi}{\longrightarrow} Q \longrightarrow 1. \]
Cuando $h$ es claro a partir del contexto, se denota simplemente por $N \rtimes Q$.

\begin{example}
    Un ejemplo de estos grupos surge naturalmente de considerar las simetr\'ias cristalogr\'aficas. Intuitivamente, un cristal es un s\'olido cuyos \'atomos est\'an dispuestos en varias ret\'iculas tridimensionales. Un cristal con esp\'in es un cristal ordinario en el que cada \'atomo lleva un peque\~no “im\'an” interno, llamado esp\'in, que no est\'a determinado \'unicamente por su posici\'on y cuya organizaci\'on colectiva da lugar a propiedades magn\'eticas y otras propiedades f\'isicas del material. Las simetr\'ias de los cristales con esp\'in viven dentro del producto cartesiano de $\mathbb{Z}/2$ con el grupo de transformaciones afines\footnote{Una transformaci\'on af\'in de $\mathbb{R}^3$ es una funci\'on $f \colon \mathbb{R}^3\longrightarrow \mathbb{R}^3$ que preserva la distancia euclidiana.} de $\mathbb{R}^3$:
\[\operatorname{Aff}(\mathbb{R}^3)\times \mathbb{Z}/2\cong\left(\mathbb{R}^3\rtimes O(3)\right)\times \mathbb{Z}/2,\]
donde el grupo $\mathbb{R}^3$ contiene las simetr\'ias de traslaci\'on del cristal, el grupo $O(3)$ contiene las simetr\'ias ortogonales, es decir, rotaciones compuestas con inversiones espaciales, y el grupo $\mathbb{Z}/2$ contiene a la simetr\'ia de inversi\'on temporal. Para ser m\'as preciso, las simetr\'ias de un cristal esp\'in est\'an codificadas por una subextensi\'on como se muestra a continuaci\'on:
\[\xymatrix{1 \ar[r] &\Lambda \ar@{^(->}[]-<0pt,8pt>;[d]  \mkern-7mu \ar[r] & G \ar[r] \ar@{^(->}[]-<0pt,8pt>;[d] \mkern-7mu & P \ar[r] \ar@{^(->}[]-<0pt,8pt>;[d] \mkern-7mu & 1 \\ 1\ar[r]+<-5pt,0pt> & \mkern+9mu \mathbb{R}^3 \ar[r] & \left(\mathbb{R}^3\rtimes O(3)\right)\times \mathbb{Z}/2\ar[r] & O(3)\times \mathbb{Z}/2 \ar[r] & 1, }\]
donde $\Lambda$ es isomorfo a $\mathbb{Z}^3$ y $P$ es un grupo finito. De manera m\'as expl\'icita, es un subgrupo $G$ de $\operatorname{Aff}(\mathbb{R}^3) \times \mathbb{Z}/2$ tal que su intersecci\'on $\Lambda$ con el subgrupo $\mathbb{R}^3$ de traslaciones es isomorfo a $\mathbb{Z}^3$, y tal que $G/\Lambda$ es finito. Al grupo $\Lambda$ se le conoce como \textbf{ret\'icula de Bravais}, al grupo $G$ como el \textbf{grupo cristalogr\'afico magn\'etico} y a $P$ como el \textbf{grupo puntual magn\'etico}.

Notemos que tanto $G$ como $P$ est\'an equipados con un homomorfismo a $\mathbb{Z}/2$. En el caso de $P$, est\'a dado por la inclusi\'on a $O(3)\times \mathbb{Z}/2$ seguido de la proyecci\'on al segundo factor, y para $G$, componemos el epimorfismo $G \to P$ con este homomorfismo $P \to \mathbb{Z}/2$ de manera que obtenemos un diagrama conmutativo:
\[\xymatrix{G \ar[r] \ar[rd]_{\phi_G} & P \ar[d]^{\phi_P}\\ & \mathbb{Z}/2.}\]

\begin{figure}[h]
\caption{Vista superior del compuesto $U_2Pd_2In$, extra\'ida de \cite{Gallego:ks5532}. Las esferas con flechas representan \'atomos de Uranio, las grises son \'atomos de Paladio y las rosas son \'atomos de Indio.}
\centering
\includegraphics[width=0.5\textwidth]{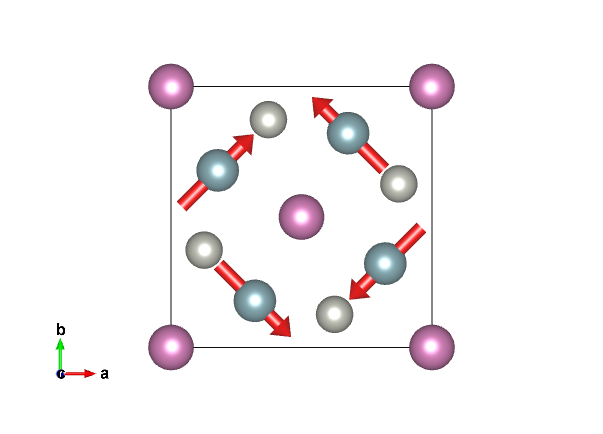}\label{mol}
\end{figure}

En la figura \ref{mol} se puede apreciar al compuesto $U_2Pd_2In$. Esta mol\'ecula tiene como grupo puntual magn\'etico $(\mathbb{Z}/4,\operatorname{mod}2)$ donde el generador de $\mathbb{Z}/4$ es una rotaci\'on de noventa grados en el plano $a-b$ seguida del operador de inversi\'on temporal. Las flechas rojas denotan la direcci\'on del esp\'in. Notemos que la inversi\'on temporal solo afecta la direcci\'on del esp\'in.  
El lector que quiera conocer un poco m\'as acerca de la f\'isica detr\'as de estos grupos puede consultar \cite{Yang2024} y todas las referencias que ah\'i se encuentran, as\'i como \cite{bradley2010mathematical}, que contiene la clasificaci\'on completa.
\end{example}

\begin{example}
    Sea $V$ un espacio vectorial sobre $\mathbb{C}$ de dimensi\'on finita. El \textbf{grupo magn\'etico general lineal de $V$}
 es
 \[\textbf{GL}(V)=\left\{A \colon V\rightarrow V \, \Bigg|\, \begin{array}{c}
    A \text{ es un operador invertible} \\
     \text{lineal o antilineal  } 
 \end{array} \right\}.\]

Un operador $A \colon V\longrightarrow V$ entre espacios vectoriales complejos es lineal si $A(zv+w)=zA(v)+A(w)$ y antilineal si $A(zv+w)=\overline{z}A(v)+A(w)$ para $ v,w\in V$ y $z\in\mathbb{C}$. Es claro que $\textbf{GL}(V)$ es un grupo y adem\'as viene provisto de manera natural con el homomorfismo
\begin{align*}
    \phi \colon \textbf{GL}(V) & \longrightarrow \mathbb{Z}/2, \\ A& \longmapsto \left\{\begin{array}{cl}
        1, & \text{si } A \text{ es antilineal}, \\
        0, & \text{si } A \text{ es lineal.} 
    \end{array}\right.
\end{align*}
El n\'ucleo de este homomorfismo es el grupo $\operatorname{GL}(V)$ de operadores lineales invertibles de $V$. Si $V=\mathbb{C}^n$, 
\[\text{?`podemos describir al grupo } \textbf{GL}(V) \text{ en t\'erminos de matrices?}\] Para responder a la pregunta, primero notemos que tenemos dos funciones
\begin{align}\label{isodegrupmag}
    \Psi \colon \textbf{GL}(\mathbb{C}^n)&\longrightarrow \operatorname{GL}(n)\times \mathbb{Z}/2, \\
    \nonumber A&\longmapsto (A\circ\mathbb{K}^{\phi(A)}, \phi(A)),
\end{align}

\begin{align*}
    \Phi \colon \operatorname{GL}(n)\times \mathbb{Z}/2 & \longrightarrow \textbf{GL}(\mathbb{C}^n),
    \\ \nonumber (A,\varepsilon) & \longmapsto A\circ \mathbb{K}^{\varepsilon},
\end{align*}
donde $\mathbb{K}^0 = \mathrm{id} \colon \mathbb{C}^n\longrightarrow \mathbb{C}^n$ y  $\mathbb{K}^1 = \mathbb{K} \colon \mathbb{C}^n\longrightarrow \mathbb{C}^n$ denota el operador de conjugaci\'on compleja entrada a entrada. Invitamos al lector a comprobar que $\Phi$ es la inversa de $\Psi$. Sin embargo, $\Psi$ y $\Phi$ no son homomorfismos puesto que
\begin{align*}
    \Psi(AB)&=(AB\circ \mathbb{K}^{\phi(AB)},\phi(AB)) \\ &= (AB\circ \mathbb{K}^{\phi(A)}\mathbb{K}^{\phi(B)},\phi(A)\phi(B))
\end{align*}
y
\begin{align}\label{semiprod}
    \nonumber\Psi(A)\Psi(B)&=(A\circ \mathbb{K}^{\phi(A)},\phi(A))(B\circ\mathbb{K}^{\phi(B)},\phi(B)) \\ \nonumber &=  (A\circ \mathbb{K}^{\phi(A)}\circ B\circ \mathbb{K}^{\phi(B)},\phi(A)\phi(B)) \\ & = (A\mathbb{K}^{\phi(A)}(B)\circ\mathbb{K}^{\phi(A)}\mathbb{K}^{\phi(B)},\phi(A)\phi(B))
\end{align}
no son iguales. No obstante, la ecuaci\'on \ref{semiprod} nos dice que si consideramos el producto semidirecto $\operatorname{GL}(n)\rtimes \mathbb{Z}/2$ donde la acci\'on de $\mathbb{Z}/2$ sobre $\operatorname{GL}(n)$ conjuga complejamente todas las entradas de la matriz, entonces $\Psi$ define un isomorfismo
\begin{equation*}
    \textbf{GL}(\mathbb{C}^n) \overset{\cong}{\longrightarrow} \text{GL}(n)\rtimes \mathbb{Z}/2.
\end{equation*}
%    \qed
\end{example}

\begin{definition}
    Un \textbf{morfismo} entre dos grupos magn\'eticos $(G,\phi_G)$ y $(H,\phi_H)$ es un homomorfismo de grupos $f \colon G\longrightarrow H$ tal que el siguiente diagrama conmuta
    \[\xymatrix{G \ar[rr]^{f} \ar[dr]_{\phi_G}&& H \ar[dl]^{\phi_H} \\ &\mathbb{Z}/2.&}\]
\end{definition}

Denotemos por $\textbf{Grp}$ la \textbf{categor\'ia de grupos magn\'eticos} y por $\operatorname{Grp}$ la \textbf{categor\'ia de grupos}. 

\section{Representaciones}

Ahora estamos listos para definir las representaciones de los grupos magn\'eticos. Los resultados que mostramos en esta secci\'on se basan en los trabajos de E. Wigner (secci\'on 26 de \cite{wigner}), en donde se enfatiza la importancia as\'i como el uso de los grupos y sus representaciones en el estudio de la mec\'anica cu\'antica. Es deseable, aunque no necesario, que el lector ya haya tenido contacto con la teor\'ia de representaciones de grupos finitos. Recomendamos revisar los primeros tres cap\'itulos del cl\'asico libro \cite{FultonHarris1991}, as\'i se podr\'an apreciar las similitudes y diferencias con respecto a lo que aqu\'i mostraremos. De ahora en adelante el grupo $G$ ser\'a finito, a menos que se diga lo contrario.

En general, la teor\'ia de representaciones busca representar objetos matem\'aticos abstractos mediante transformaciones de estructuras matem\'aticas m\'as concretas, con el prop\'osito de reducir la abstracci\'on o facilitar c\'alculos. Por ejemplo, una representaci\'on de un grupo $G$ en un espacio vectorial complejo $V$ asigna a cada elemento del grupo un isomorfismo lineal de $V$ mediante un homomorfismo $G \to \operatorname{GL}(V)$. 

\begin{definition}
    Una \textbf{representaci\'on} de un grupo magn\'etico $(G,\alpha)$ en un espacio vectorial complejo $V$ de dimensi\'on finita es un homomorfismo de grupos magn\'eticos
    \[\rho \colon (G,\alpha)\longrightarrow (\textbf{GL}(V),\phi).\]
\end{definition}

Este homomorfismo determina una acci\'on de $G$ sobre $V$ dada por $g \cdot v = \rho(g)(v)$. La compatibilidad del homomorfismo $\rho$ con los homomorfismos a $\mathbb{Z}/2$ significa que los elementos de $G$ en el n\'ucleo de $\phi$ act\'uan sobre $V$ de forma lineal mientras los que no est\'an en el n\'ucleo lo har\'an de forma antilineal. Si $V=\mathbb{C}^n$, entonces por el isomorfismo dado en la ecuaci\'on \ref{isodegrupmag} podemos escribir cualquier representaci\'on como un homomorfismo
\begin{align*}
    \rho \colon G& \longrightarrow \operatorname{GL}(n)\rtimes \mathbb{Z}/2, \\ g& \longmapsto (\rho'(g),\phi(g)),
\end{align*}
para cierta funci\'on $\rho' \colon G\longrightarrow \operatorname{GL}(n)$. Usando que $\rho$ es un homomorfismo y la forma de la multiplicaci\'on en el producto semidirecto, vemos que es $\rho'$ no es un homomorfismo pero satisface la condici\'on similar
\begin{align}\label{matriz}
    \rho'(gh)&=\rho'(g)\circ \mathbb{K}^{\phi(g)}(\rho'(h)). 
\end{align}

\begin{example}
    Consideremos el grupo magn\'etico $(\mathbb{Z}/4,\operatorname{mod}2)$. Dos de sus representaciones est\'an dadas por
    \begin{align*}
        \mathbb{Z}/4& \longrightarrow \textbf{GL}(\mathbb{C}), \\
        1&\longmapsto \mathbb{K},
    \end{align*}

    \begin{align*}
        \mathbb{Z}/4& \longrightarrow \textbf{GL}(\mathbb{C}^2), \\
        1&\longmapsto \begin{pmatrix}
            0&-1\\1&0
        \end{pmatrix}\circ\mathbb{K}.
    \end{align*}
    En general, cualquier representaci\'on de este grupo es de la forma 
    \begin{equation*}
        1\longmapsto A\circ\mathbb{K}    
    \end{equation*}
    con $A\in \operatorname{GL}(n)$, y adem\'as satisface que
    \begin{align*}
        \mathrm{id} & = (A\mathbb{K})^4 \\ 
        & = A\circ\mathbb{K}\circ A\circ\mathbb{K}\circ A\circ\mathbb{K}\circ A\circ\mathbb{K}\\ & = A\overline{A}\circ\mathbb{K}^2\circ A\overline{A}\circ\mathbb{K}^2 \\ & = (A\overline{A})^2.
    \end{align*}
 El mismo argumento muestra que cualquier representaci\'on del grupo magn\'etico $(\mathbb{Z}/2n,\operatorname{mod}2)$, est\'a dada por
    \begin{align}
        \nonumber \mathbb{Z}/2n & \longrightarrow \textbf{GL}(\mathbb{C}^m), \\ \nonumber 1 & \longmapsto A\circ \mathbb{K},
    \end{align}
    donde $A\in \operatorname{GL}(m)$ y $(A\overline{A})^n=\mathrm{id}$.
\end{example}

\begin{example}
Para cualquier grupo magn\'etico finito $(G,\phi)$, su representaci\'on regular $\mathbb{C}[G,\phi]$ es el espacio vectorial complejo $\mathbb{C}[G]$ cuya base es $G$, con la acci\'on de $(G,\phi)$ dada por 
    \[g\cdot \sum_i \lambda_i g_i= \sum_i \mathbb{K}^{\phi(g)}(\lambda_i)(gg_i).\]
\end{example}

\begin{definition}
    Un \textbf{morfismo} entre dos representaciones $\rho_i \colon G \longrightarrow \textbf{GL}(V_i)$ $i=1,2$ es una transformaci\'on lineal $T \colon V_1\longrightarrow V_2$ tal que el diagrama  
    \[\xymatrix{V_1 \ar[r]^{\rho_1(g)} \ar[d]_{T} & V_1 \ar[d]^{T}\\ V_2 \ar[r]_{\rho_2(g)} & V_2  
    }\]
    conmuta para todo $g\in G$.
\end{definition}

Denotemos la categor\'ia de representaciones del grupo magn\'etico $(G,\phi)$ por $\textbf{Rep}(G,\phi)$ y por $\operatorname{Rep}(G_0)$ a la categor\'ia de representaciones
%\footnote{Una representaci\'on de $G_0$ es un homomorfismo de grupos $G_0 \longrightarrow \operatorname{GL}(V)$.}
(cl\'asicas) del grupo $G_0$. Al conjunto de morfismos entre dos representaciones $\rho_1$ y $\rho_2$ de $(G,\phi)$ \'o $G_0$ lo denotaremos por $\operatorname{Hom}_{(G,\phi)}(\rho_1,\rho_2)$ \'o $\operatorname{Hom}_{G_0}(\rho_1,\rho_2)$, respectivamente.

\section{Representaciones inducidas}

Notemos que al restringir una representaci\'on de $(G,\phi)$ al subgrupo $G_0$ obtenemos una representaci\'on de $G_0$. De hecho, restricci\'on define un funtor
\begin{equation*}
    \text{Res}_{G_0}^G \colon \textbf{Rep}(G,\phi) \longrightarrow \operatorname{Rep}(G_0).
\end{equation*}
Si ahora comenzamos con una representaci\'on del grupo $G_0$, entonces \[\text{?`c\'omo podemos asociarle una representaci\'on de $(G,\phi)$?}\]
\begin{definition}
    Sea $\rho \colon G_0\longrightarrow \operatorname{GL}(V)$ una representaci\'on del grupo $G_0$. La \textbf{representaci\'on inducida} de $V$ a $(G,\phi)$ est\'a dada por el espacio vectorial
    \[\operatorname{Ind}^{G}_{G_0}V= \mathbb{C}[G,\phi]\otimes_{\mathbb{C}[G_0]} V, \]
    donde $\mathbb{C}[G,\phi]$ es la representaci\'on regular del grupo magn\'etico $(G,\phi)$ y $\mathbb{C}[G_0]$ es la representaci\'on regular del grupo $G_0$. La acci\'on del grupo magn\'etico $(G,\phi)$ en $\operatorname{Ind}^{G}_{G_0}V$ est\'a dada por la acci\'on en el primer factor. 
\end{definition}
La construcci\'on anterior se puede extender a los morfismos de representaciones para producir un funtor
\[\operatorname{Ind}^{G}_{G_0} \colon \operatorname{Rep}(G_0) \longrightarrow \textbf{Rep}(G,\phi).\]
Los funtores $\operatorname{Res}_{G_0}^G$ y $\operatorname{Ind}^{G}_{G_0}$ son muy importantes, pues, entre otras cosas, nos permitir\'an construir representaciones irreducibles de $(G,\phi)$ a partir de representaciones irreducibles de $G_0$. Pero antes daremos otra forma m\'as elemental de pensar la inducci\'on.

Sea $\rho \colon G_0\longrightarrow \operatorname{GL}(V)$ una representaci\'on de $G_0$ y fijemos un elemento $a\in G\backslash G_0$. Consideremos el espacio vectorial
\[ aV =\{av\,|\, v\in V\} \]
con las operaciones
\[ av+aw=a(v+w), \qquad \qquad \lambda\cdot av=a(\overline{\lambda}v). \]
Definimos la \textbf{representaci\'on conjugada} de $\rho$ como
\begin{align}\label{repconjugda}
    \rho^* \colon G_0 & \longrightarrow \operatorname{GL}(aV), \\ \nonumber g& \ \longmapsto \overline{\rho(a^{-1}ga)}, 
\end{align}
donde
\[ \overline{\rho(a^{-1}ga)}(av) = a[\rho(a^{-1}ga)(v)]. \]
Veamos que $\rho^*$ es un homomorfismo de grupos
\begin{align*}
    \rho^*(gh)(av) &= a[\rho(a^{-1}gha)(v)] \\ & = a[\rho(a^{-1}gaa^{-1}ha)(v)] \\&= a[\rho(a^{-1}ga) \rho(a^{-1}ha)(v)] \\&= \rho^*(g)(a[\rho(a^{-1}ha)(v)]) \\&= [\rho^*(g) \circ \rho^*(h)](av).
\end{align*}
Por lo tanto $\rho^*$ es una representaci\'on de $G_0$. Es conveniente explicar la notaci\'on en detalle, para lo cual introducimos los conjugados de operadores lineales.

\begin{definition}
Dado un operador lineal $S \colon \mathbb{C}^n \to \mathbb{C}^n$, su conjugado es el operador lineal $\overline{S} = \mathbb{K} S \mathbb{K}$.
\end{definition}

Cuando $V=\mathbb{C}^n$, el isomorfismo lineal $a\mathbb{C}^n \to \mathbb{C}^n$ dado por $av \mapsto \overline{v}$ se convierte en un isomorfismo de representaciones si a $\mathbb{C}^n$ le damos la acci\'on
\[ g w = \overline{\rho(a^{-1}ga)(\bar{w})} = \mathbb{K} \rho(a^{-1}ga) \mathbb{K}(w). \]
Si identificamos $a\mathbb{C}^n$ con $\mathbb{C}^n$ mediante este isomorfismo, obtenemos
\begin{equation}
\label{conjugadafacil}
\rho^*(g) = \mathbb{K} \rho(a^{-1}ga) \mathbb{K} = \overline{\rho(a^{-1}ga)}. 
\end{equation}
De aqu\'i viene la notaci\'on usada en la definici\'on original de $\rho^*$. Si estamos trabajando con un espacio vectorial $V$ sin una elecci\'on de base, hay que ser m\'as cuidadoso al definir el conjugado de un operador lineal $T \colon aV \to V$. Puesto que $a \colon \mathbb{C}^n \to a\mathbb{C}^n$ se transforma en $\mathbb{K}$ bajo el isomorfismo $a\mathbb{C}^n \cong \mathbb{C}^n$, en este contexto definimos el conjugado de $T$ como sigue.
\begin{definition}
\label{operadorconjugado}
Dado un operador lineal $T \colon aV \to V$, su conjugado es el operador lineal
\begin{align*}
\overline{T} \colon V & \longrightarrow aV, \\
                    v & \longmapsto aT(av).
\end{align*}
\end{definition}

\begin{remark}
\label{homomorfismofacil}
Notemos que tenemos una partici\'on 
\begin{equation*}
    G=G_0\sqcup aG_0
\end{equation*}
del grupo $G$. Es f\'acil probar que definir un homomorfismo $f \colon G \to H$ es equivalente a proporcionar un homomorfismo $f_0 \colon G_0 \to H$ y un elemento $h \in H$ que satisfacen
\begin{enumerate}
\item $f_0(a^{-1}ga) = h^{-1}f_0(g)h$ para todo $g \in G_0$, y
\item $f_0(a^2)=h^2$.
\end{enumerate}
El homomorfismo $f$ coincidir\'ia con $f_0$ en $G_0$ y estar\'ia dado por $f(ag)=hf_0(g)$ en $aG_0$. Si $H$ es un grupo magn\'etico y $h \not\in H_0$, entonces $f$ define un morfismo de grupos magn\'eticos. Por esta raz\'on, de ahora en adelante solo damos los valores de las representaciones de un grupo magn\'etico $(G,\phi)$ en $G_0$ y en $a$, y dejamos como ejercicio para el lector comprobar que estas tres condiciones se satisfacen en cada caso.
\end{remark}

Volviendo a la situaci\'on anterior, el homomorfismo $\rho \colon G_0 \to \operatorname{GL}(V)$ da lugar a la representaci\'on
\begin{align*}
    \widetilde{\rho} \colon G& \longrightarrow \textbf{GL}(V\oplus aV),\\ g&\longmapsto \begin{cases}
        \begin{pmatrix}
            \rho(g) & 0\\ 0& \rho(g)^*
        \end{pmatrix}, & \text{ si } g\in G_0, \\ 
        & \\
        \begin{pmatrix}
            0 & \rho(a^2)a^{-1}\\ a &0
        \end{pmatrix}, & \text{ si } g=a,
    \end{cases}
\end{align*}
de $(G,\phi)$, donde $a \colon V\longrightarrow aV$ y $a^{-1} \colon aV \longrightarrow V$ son los operadores antilineales que uno se imagina. Cuando $V=\mathbb{C}^n$, entonces podemos identificar $aV$ igualmente con $\mathbb{C}^n$ y el operador $a \colon V\longrightarrow aV$ con  el operador $\mathbb{K} \colon \mathbb{C}^n\longrightarrow \mathbb{C}^n$ de conjugaci\'on compleja entrada a entrada. Con estas identificaciones, se tiene
\begin{equation}\label{induccionconcreta}
\widetilde{\rho}(a) =    \begin{pmatrix}
        0& \rho(a^2) \\ 1 &0 
    \end{pmatrix} \circ \mathbb{K}.
\end{equation}

\begin{proposition}
\label{repinducida}
    Sea $a\in G\backslash G_0$ y $V$ una representaci\'on de $G_0$. Entonces existe un isomorfismo de representaciones 
    \begin{equation*}
        \operatorname{Ind}^G_{G_0}V \longrightarrow V\oplus aV.
    \end{equation*}
\end{proposition}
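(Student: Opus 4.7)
The plan is to build an explicit $\mathbb{C}$-linear map
$$\Psi \colon V \oplus aV \longrightarrow \operatorname{Ind}^G_{G_0} V, \qquad (v, aw) \longmapsto 1 \otimes v + a \otimes w,$$
and verify that it is an isomorphism of magnetic representations.

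First I would use the coset decomposition $G = G_0 \sqcup aG_0$ together with the balanced relation $xh \otimes v = x \otimes hv$ (for $h \in \mathbb{C}[G_0]$) to show that every element of $\operatorname{Ind}^G_{G_0} V$ can be written uniquely in the form $1 \otimes v + a \otimes w$ with $v, w \in V$. This gives a natural basis compatible with the decomposition of $V \oplus aV$ and shows $\dim_\mathbb{C} \operatorname{Ind}^G_{G_0} V = 2 \dim_\mathbb{C} V = \dim_\mathbb{C}(V \oplus aV)$, so once $\Psi$ is known to be $\mathbb{C}$-linear and additive it will automatically be bijective.

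The delicate step is checking $\mathbb{C}$-linearity of $\Psi$, and the subtlety lies entirely in the second summand: the formula $\lambda \cdot aw = a(\overline{\lambda} w)$ from the definition of $aV$ must be matched by the identity $\lambda(a \otimes w) = a \otimes \overline{\lambda} w$ inside $\operatorname{Ind}^G_{G_0} V$. This identity is precisely the one forced by requiring the $(G,\phi)$-action on the tensor product to be a magnetic representation, so that $a$ acts antilinearly; I expect this to be the main obstacle, since it requires careful bookkeeping of the complex structure that the tensor product inherits when the left action of $\mathbb{C}[G,\phi]$ is only partially $\mathbb{C}$-linear.

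Equivariance of $\Psi$ can then be handled using Remark \ref{homomorfismofacil}, which reduces the check to the subgroup $G_0$ and the single element $a$. On $G_0$, the computation $g \cdot (a \otimes w) = ga \otimes w = a(a^{-1}ga) \otimes w = a \otimes \rho(a^{-1}ga) w$ reproduces exactly the action of $\rho^*(g)$ on $aV$, while $g \cdot (1 \otimes v) = 1 \otimes \rho(g) v$ reproduces $\rho(g)$ on $V$. For the element $a$, using that $a^2 \in G_0$, the identities $a \cdot (1 \otimes v) = a \otimes v$ and $a \cdot (a \otimes w) = a^2 \otimes w = 1 \otimes \rho(a^2) w$ reproduce exactly the off-diagonal form of $\widetilde{\rho}(a)$ recorded in equation \eqref{induccionconcreta}, and together with $\mathbb{C}$-linearity this completes the argument.
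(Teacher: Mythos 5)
Tu propuesta es correcta y sigue esencialmente el mismo camino que la demostraci\'on del art\'iculo: el mismo morfismo $(v,aw)\mapsto 1\otimes v + a\otimes w$ (all\'i llamado $\Phi$), la misma descomposici\'on en clases laterales $G=G_0\sqcup aG_0$ y la misma relaci\'on balanceada para escribir todo elemento del inducido en la forma can\'onica. De hecho eres m\'as expl\'icito que el texto, que deja la equivariancia como ejercicio; tus c\'alculos para $g\in G_0$ y para $a$ reproducen correctamente $\rho^*$ y la ecuaci\'on \eqref{induccionconcreta}, y haces bien en se\~nalar que la identidad $\lambda(a\otimes w)=a\otimes\overline{\lambda}w$ es el punto delicado de la linealidad.
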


\begin{proof}
Cualquier elemento de $\operatorname{Ind}^G_{G_0}V$ tiene la forma
\[ x=\sum_{i=1}^n c_ig_i\otimes v_i. \]
Sea $J_0=\{ i \in \{ 1,\ldots,n \} \mid g_i \in G_0 \}$ y $J_1$ su complemento en $\{1,\ldots,n\}$. Si $i \in J_1$, entonces podemos escribir $g_i = ag_i'$ para un \'unico $g_i' \in G_0$. Entonces tenemos
    \begin{align}
        \nonumber x & = \sum_{i=1}^n c_ig_i\otimes v_i \\ \nonumber & = \sum_{i \in J_0} c_ig_i\otimes v_i +\sum_{i \in J_1} c_ig_i\otimes v_i \\ \nonumber & = \sum_{i \in J_0} c_ie\otimes g_i\cdot v_i +\sum_{i \in J_1} c_iag'_i\otimes v_i \\ \label{inddescom} & = \sum_{i \in J_0} c_ie\otimes g_i\cdot v_i +\sum_{i \in J_1} c_ia\otimes g'_i\cdot v_i.
    \end{align}
    Con la descomposici\'on anterior podemos definir las transformaciones lineales
    \begin{align*}
        \Psi \colon \operatorname{Ind}^G_{G_0}V &\longrightarrow V\oplus aV, \\ \sum_i c_ig_i\otimes v_i & \longmapsto \left(\sum_{i \in J_0} c_i (g_i\cdot v_i)\,,\, \sum_{i \in J_1} c_ia (g'_i\cdot v_i) \right), \\
        \\
        \Phi \colon V\oplus aV & \longrightarrow \operatorname{Ind}^G_{G_0} V, \\ (v,aw) &\longmapsto e\otimes v+ a\otimes w.
    \end{align*}
    Gracias a la descomposici\'on \ref{inddescom} vemos que $\Psi$ es la inversa de $\Phi$. Dejamos como ejercicio al entusiasta lector el verificar que estas funciones son compatibles con la acci\'on de $(G,\phi)$.
\end{proof}
\begin{remark}
    Notemos que la representaci\'on conjugada $\rho^*$ depende en principio del elemento $a\in G\backslash G_0$ que se eligi\'o. Sin embargo, no es dif\'icil comprobar que diferentes elecciones de $a$ producen representaciones isomorfas.
\end{remark}

Ahora veremos que los funtores de inducci\'on y restricci\'on son adjuntos. El lector no versado en el lenguaje de categor\'ias puede ignorar la terminolog\'ia de funtores, adjuntos y naturalidad en esta proposici\'on ya que en el resto del art\'iculo solo utilizamos que existe una biyecci\'on.

\begin{proposition}[Reciprocidad de Frobenius]\label{reciprocidad}
    Los funtores $\operatorname{Res}^{G}_{G_0}$ y $\operatorname{Ind}^{G}_{G_0}$ son adjuntos, es decir, existe una biyecci\'on natural
    \begin{align*}
        \operatorname{Hom}_{(G,\phi)}(\operatorname{Ind}^G_{G_0} V,W) \longrightarrow \operatorname{Hom}_{G_0}( V, \operatorname{Res}^{G}_{G_0} W)
    \end{align*}
    para cualesquiera representaciones $V$ y $W$ de $G_0$ y $(G,\phi)$, respectivamente.
\end{proposition}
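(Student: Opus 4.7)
El plan es usar la descripción concreta $\operatorname{Ind}^G_{G_0}V \cong V\oplus aV$ de la proposición \ref{repinducida} y realizar la construcción a ese nivel. Fijamos $a\in G\setminus G_0$ y escribimos $\sigma\colon G\to \textbf{GL}(W)$ para la representación de $(G,\phi)$ en $W$. Bajo esta identificación, la aplicación directa es la restricción al primer sumando, $\Phi(T)(v)=T(v,0)$, que es automáticamente $G_0$-equivariante, y proponemos como inversa
\[
\Psi(f)(v,aw) \;=\; f(v) + \sigma(a)\bigl(f(w)\bigr).
\]

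El primer paso ser\'a mostrar que $\Psi(f)$ es $\mathbb{C}$-lineal. Esto es inmediato en el sumando $V$; en $aV$, la multiplicación escalar torcida $\lambda\cdot aw = a(\bar\lambda w)$ combinada con la antilinealidad de $\sigma(a)$ se cancelan para dar $\Psi(f)(\lambda\cdot aw)=\lambda\,\Psi(f)(aw)$. A continuación verificaremos que $\Psi(f)$ es $G$-equivariante usando la observaci\'on \ref{homomorfismofacil}, que reduce la tarea a comprobar la compatibilidad con cada $g_0\in G_0$ y con $a$. Para $g_0\in G_0$, la identidad $\rho^*(g_0)(aw)=a[\rho(a^{-1}g_0a)w]$ junto con la $G_0$-equivarianza de $f$ resuelve ese caso. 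Para el elemento $a$, la forma de bloque de $\widetilde{\rho}(a)$ en \ref{induccionconcreta}, la antilinealidad de $\sigma(a)$, y la relación $\sigma(a)^2=\sigma(a^2)$ se combinan para dar la identidad requerida.

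Luego comprobaremos que $\Phi$ y $\Psi$ son inversos mutuos. La igualdad $\Phi\circ\Psi=\mathrm{id}$ es inmediata de las fórmulas. Para $\Psi\circ\Phi=\mathrm{id}$ observamos que, para cualquier $T$ $(G,\phi)$-equivariante con $f:=T|_V$, debe cumplirse $T(0,aw)=\sigma(a)f(w)$: esto sigue aplicando la $G$-equivarianza de $T$ a la igualdad $a\cdot(w,0)=(0,aw)$, que proviene de la forma de bloque de $\widetilde{\rho}(a)$. La naturalidad de la biyección en $V$ y $W$ es entonces una verificación directa a partir de las fórmulas explícitas.

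El obstáculo principal al ejecutar este plan es el manejo cuidadoso de la interacción entre la estructura torcida de $\mathbb{C}$-módulo en $aV$ y la antilinealidad del operador $\sigma(a)$: estas dos antilinealidades deben cancelarse precisamente de la manera correcta para que $\Psi(f)$ sea simultáneamente $\mathbb{C}$-lineal y magn\'eticamente equivariante. Una vez entendida esta interacción, cada verificación restante se reduce a un cálculo directo y corto.
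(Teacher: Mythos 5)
Tu propuesta es correcta, y en el fondo construye la misma adjunci\'on que el art\'iculo, pero la ejecuta en un modelo distinto del funtor de inducci\'on. El art\'iculo trabaja directamente con $\operatorname{Ind}^G_{G_0}V=\mathbb{C}[G,\phi]\otimes_{\mathbb{C}[G_0]}V$ y da las f\'ormulas compactas $\Phi(f)(v)=f(1\otimes v)$ y $\Psi(f)(\sum_i c_ig_i\otimes v_i)=\sum_i c_ig_i\cdot f(v_i)$; t\'u trasladas todo al modelo concreto $V\oplus aV$ de la proposici\'on \ref{repinducida}, y tus aplicaciones $\Phi(T)(v)=T(v,0)$ y $\Psi(f)(v,aw)=f(v)+\sigma(a)(f(w))$ son exactamente las del art\'iculo vistas a trav\'es del isomorfismo $e\otimes v\mapsto (v,0)$, $a\otimes w\mapsto (0,aw)$. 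Lo que tu versi\'on aporta es hacer expl\'icito el punto delicado que la notaci\'on tensorial esconde: la estructura torcida $\lambda\cdot aw=a(\overline{\lambda}w)$ y la antilinealidad de $\sigma(a)$ se cancelan para que $\Psi(f)$ sea $\mathbb{C}$-lineal, y la identidad $T(0,aw)=\sigma(a)T(w,0)$ (consecuencia de $a\cdot(w,0)=(0,aw)$ por la forma de bloque de \ref{induccionconcreta}) es precisamente lo que fuerza $\Psi\circ\Phi=\mathrm{id}$. A cambio, la versi\'on tensorial del art\'iculo hace la naturalidad y la buena definici\'on sobre $\otimes_{\mathbb{C}[G_0]}$ m\'as transparentes, y no depende de la elecci\'on de $a$. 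Todas tus verificaciones intermedias (linealidad, equivarianza respecto a $G_0$ y a $a$ v\'ia la observaci\'on \ref{homomorfismofacil}, e inversi\'on mutua) se completan tal como las describes.
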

\begin{proof}
    Definamos las funciones
    \begin{align*}
        \Phi \colon \operatorname{Hom}_{(G,\phi)}(\operatorname{Ind}^G_{G_0} V,W)& \longrightarrow \operatorname{Hom}_{G_0}( V, \operatorname{Res}_{G_0}^G W) \\ f & \longmapsto \Phi(f),
    \end{align*}
    donde $\Phi(f)(v)=f(1\otimes v)$, y
    \begin{align*}
      \Psi \colon \operatorname{Hom}_{G_0}( V, \operatorname{Res}_{G_0}^G W) & \longrightarrow \operatorname{Hom}_{(G,\phi)}(\operatorname{Ind}^G_{G_0} V,W), \\ f & \longmapsto \Psi(f),
    \end{align*}
    donde $\Psi(f)(\sum_i c_ig_i\otimes v_i)=\sum_i c_ig_i\cdot f(v_i)$. Es sencillo comprobar que $\Phi$ es la inversa de $\Psi$ y que define una transformaci\'on natural.
\end{proof}

\section{Lema de Schur}

Las representaciones de grupos magn\'eticos se pueden descomponer en representaciones m\'as simples (irreducibles), al igual que las representaciones de grupos usuales. La demostraci\'on de este hecho es totalmente an\'aloga a la de representaciones de grupos por lo que solo mencionaremos los pasos a seguir. \newline

\begin{itemize}
    \item Definici\'on de representaci\'on irreducible. 
    \end{itemize}
    
    \begin{definition}
        Una representaci\'on $V$ de $(G,\phi)$ es \textbf{irreducible} si sus \'unicas subrepresentaciones son 0 y $V$.
    \end{definition}
    \begin{itemize}

    \item Descomposici\'on de representaciones. 
    \end{itemize}
    \begin{theorem}[Maschke]
        Sea $V$ una representaci\'on de $(G,\phi)$ y $V_1\subset V$ una subrepresentaci\'on. Entonces existe otra subrepresentaci\'on $V_2\subset V$ tal que $V\cong V_1\oplus V_2$ como representaciones de $(G,\phi)$.
    \end{theorem}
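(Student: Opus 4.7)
The plan is to adapt the classical proof of Maschke's theorem via an averaging argument, being careful to account for the antilinear action of the elements $g \in G \setminus G_0$. The key idea is to construct a Hermitian inner product on $V$ that is invariant under $G$ in an appropriately magnetic sense, and then take $V_2$ to be the orthogonal complement of $V_1$.

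First, I would start from an arbitrary Hermitian inner product $(\cdot,\cdot)$ on $V$ (which exists since $V$ is a finite-dimensional complex vector space) and average it to obtain
\[ \langle v, w\rangle \,=\, \sum_{g\in G} \mathbb{K}^{\phi(g)}\bigl((gv,gw)\bigr). \]
The main subtlety, and the conceptual heart of the proof, is pinning down the right notion of invariance. For a linear $g\in G_0$ one expects the usual $\langle gv,gw\rangle=\langle v,w\rangle$, but for an antilinear $g\in G\setminus G_0$ the sesquilinearity forces the condition $\langle gv,gw\rangle = \overline{\langle v,w\rangle}$. Both cases are unified as $\langle gv,gw\rangle=\mathbb{K}^{\phi(g)}(\langle v,w\rangle)$, which is exactly what the above average achieves after a change of summation variable $g\mapsto gh^{-1}$.

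Next I would verify that this averaged form is indeed a Hermitian inner product: Hermiticity follows from $\overline{(gw,gv)}=(gv,gw)$ together with the identity $\overline{\mathbb{K}^{\phi(g)}(z)}=\mathbb{K}^{\phi(g)+1}(z)$; positive definiteness is immediate because each $(gv,gv)$ is a nonnegative real number, so the $\mathbb{K}^{\phi(g)}$ acts trivially on it, and the sum vanishes only when $v=0$.

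Finally, I would set $V_2 = V_1^{\perp}$ with respect to $\langle\cdot,\cdot\rangle$. The decomposition $V=V_1\oplus V_2$ as complex vector spaces is standard; what remains is showing $V_2$ is stable under $G$. For $v\in V_2$, $h\in G$ and $w\in V_1$, I would write $w=h(h^{-1}w)$ with $h^{-1}w\in V_1$ (using that $V_1$ is a subrepresentation), and then
\[ \langle hv, w\rangle = \langle hv, h(h^{-1}w)\rangle = \mathbb{K}^{\phi(h)}\bigl(\langle v, h^{-1}w\rangle\bigr) = \mathbb{K}^{\phi(h)}(0) = 0, \]
so $hv\in V_2$. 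The hardest part is not computational but notational: making sure the magnetic invariance condition is correctly formulated so that the averaged form is genuinely Hermitian rather than merely conjugate-symmetric up to a sign, and that the $\mathbb{K}^{\phi(g)}$'s cancel as expected under the reindexing step.
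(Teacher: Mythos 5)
Your proof is correct. Note that the paper does not actually write out a proof of this theorem: it only remarks that the argument is entirely analogous to the classical case and lists the steps, so there is no written proof to compare against. Your unitarization argument is a valid way of carrying out that analogy, and you have correctly isolated the genuinely magnetic point: the factor $\mathbb{K}^{\phi(g)}$ inside the average is forced both by the invariance condition $\langle gv,gw\rangle=\mathbb{K}^{\phi(g)}(\langle v,w\rangle)$ and, just as importantly, by sesquilinearity itself --- for an antilinear $g$ the term $(gv,gw)$ is conjugate-sesquilinear in $(v,w)$, and the outer $\mathbb{K}$ is exactly what restores ordinary sesquilinearity of each summand; that one-line check is worth writing explicitly alongside Hermiticity and positivity, which you do verify. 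The reindexing step works because $\phi(h^{-1})=\phi(h)$ in $\mathbb{Z}/2$ and $\mathbb{K}$ is additive, so $\mathbb{K}^{\phi(h)}$ pulls out of the finite sum. An equally standard alternative, closer in spirit to how the paper later manipulates intertwiners, is to average a linear projection $\pi\colon V\to V_1$ as $\pi_0=\tfrac{1}{|G|}\sum_{g}\rho(g)\pi\rho(g)^{-1}$: each conjugate $\rho(g)\pi\rho(g)^{-1}$ is again linear even when $\rho(g)$ is antilinear, $\pi_0$ is a $(G,\phi)$-equivariant projection onto $V_1$, and one takes $V_2=\operatorname{Ker}(\pi_0)$, avoiding inner products altogether. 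Either route proves the statement.
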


    \begin{itemize}
    \item Semisimplicidad. 
    \end{itemize}
    \begin{corollary}\label{descomenirreps}
        Toda representaci\'on de $(G,\phi)$ es suma directa de representaciones irreducibles.
    \end{corollary}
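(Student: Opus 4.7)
El plan es proceder por inducci\'on sobre la dimensi\'on compleja de la representaci\'on $V$, usando el teorema de Maschke como el ingrediente esencial. La semisimplicidad se reduce entonces a un argumento puramente combinatorio sobre la ret\'icula de subrepresentaciones.

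Primero, fijar\'ia el caso base: si $\dim_{\mathbb{C}} V = 0$, entonces $V$ es la suma directa vac\'ia de representaciones irreducibles, y si $V$ es irreducible de dimensi\'on positiva, entonces $V$ ya es suma directa (con un solo sumando) de irreducibles. En ambos casos no hay nada que probar. Para el paso inductivo, supondr\'ia el resultado v\'alido para toda representaci\'on de $(G,\phi)$ de dimensi\'on estrictamente menor que $\dim_{\mathbb{C}} V$, y considerar\'ia una representaci\'on $V$ que no sea irreducible. Por definici\'on, existe una subrepresentaci\'on $V_1 \subset V$ con $V_1 \neq 0$ y $V_1 \neq V$.

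A continuaci\'on, aplicar\'ia el teorema de Maschke para obtener una subrepresentaci\'on $V_2 \subset V$ tal que $V \cong V_1 \oplus V_2$ como representaciones de $(G,\phi)$. Dado que $V_1$ y $V_2$ son subrepresentaciones propias y no triviales, se tiene $\dim_{\mathbb{C}} V_i < \dim_{\mathbb{C}} V$ para $i=1,2$; por lo tanto, la hip\'otesis de inducci\'on se aplica tanto a $V_1$ como a $V_2$, dando descomposiciones
\[ V_1 \cong \bigoplus_{j=1}^{r} W_j, \qquad V_2 \cong \bigoplus_{k=1}^{s} W'_k, \]
con cada $W_j$ y $W'_k$ irreducibles. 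Concatenando estas dos descomposiciones se obtiene $V$ como suma directa de representaciones irreducibles, lo que cierra el paso inductivo.

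El \'unico punto que requiere algo de cuidado, m\'as que un verdadero obst\'aculo, es asegurarse de que la inducci\'on est\'a bien fundada: es decir, que la elecci\'on de $V_1$ como subrepresentaci\'on \emph{propia y no nula} garantiza que tanto $V_1$ como su complemento $V_2$ tienen dimensi\'on estrictamente menor que la de $V$. Esto es inmediato de la descomposici\'on en suma directa, pero conviene enunciarlo expl\'icitamente para justificar la aplicaci\'on de la hip\'otesis inductiva. Todo el resto es mec\'anico y no depende de las particularidades de los grupos magn\'eticos m\'as all\'a de lo ya incorporado en el teorema de Maschke.
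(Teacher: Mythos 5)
Tu demostraci\'on es correcta y coincide con el argumento est\'andar que el art\'iculo tiene en mente: el texto omite la prueba se\~nalando que es ``totalmente an\'aloga'' al caso cl\'asico, y tu inducci\'on sobre la dimensi\'on apoyada en el teorema de Maschke es precisamente esa prueba. La observaci\'on final sobre la buena fundamentaci\'on de la inducci\'on (que $V_1 \neq 0$ y $V_1 \neq V$ fuerzan $\dim V_i < \dim V$ para $i=1,2$) est\'a bien justificada.
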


Recordemos el lema de Schur cl\'asico y su demostraci\'on pues ser\'a de utilidad en los siguientes resultados

\begin{lemma}[Lema de Schur]\label{lemadeSchurclasico}
    Sean $V,W$ dos representaciones irreducibles de $G_0$.
    \begin{enumerate}
        \item Si $T\in \operatorname{Hom}_{G_0}(V,W)$, entonces $T=0$ \'o $T$ es un isomorfismo.
        \item Si $T\in \operatorname{End}_{G_0}(V)$, entonces $T=\lambda\cdot \mathrm{id}_V$ con $\lambda\in \mathbb{C}$. Por lo tanto,
        \begin{equation*}
            \operatorname{End}_{G_0}(V)\cong \mathbb{C}.
        \end{equation*}
    \end{enumerate}
\end{lemma}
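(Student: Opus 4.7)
Mi plan es seguir la l\'inea cl\'asica del argumento y separar la demostraci\'on en las dos partes del enunciado, usando solo las definiciones de morfismo de representaciones, subrepresentaci\'on y representaci\'on irreducible, junto con el cierre algebraico de $\mathbb{C}$.

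Para la parte (1), la estrategia es observar que tanto $\ker(T)$ como $\operatorname{Im}(T)$ son subrepresentaciones. La $G_0$-equivariancia de $T$ da, para $v\in \ker(T)$ y $g\in G_0$, que $T(g\cdot v)=g\cdot T(v)=0$, luego $\ker(T)$ es estable bajo la acci\'on de $G_0$; an\'alogamente, si $w=T(v)$, entonces $g\cdot w=T(g\cdot v)\in \operatorname{Im}(T)$. Por la irreducibilidad de $V$ se cumple $\ker(T)\in\{0,V\}$, y por la de $W$ se cumple $\operatorname{Im}(T)\in\{0,W\}$. Si $T\neq 0$, entonces $\ker(T)\neq V$ y $\operatorname{Im}(T)\neq 0$, forzando $\ker(T)=0$ y $\operatorname{Im}(T)=W$; en consecuencia $T$ es biyectiva y, siendo ya un morfismo de representaciones, un isomorfismo.

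Para la parte (2), aprovechar\'e que $\mathbb{C}$ es algebraicamente cerrado. Como $V$ es de dimensi\'on finita, el polinomio caracter\'istico de $T$ admite una ra\'iz $\lambda\in\mathbb{C}$, lo que produce un autovector no nulo y por tanto $\ker(T-\lambda\cdot \mathrm{id}_V)\neq 0$. Dado que $\mathrm{id}_V\in \operatorname{End}_{G_0}(V)$ y este conjunto es cerrado bajo combinaciones lineales, se tiene $T-\lambda\cdot \mathrm{id}_V\in \operatorname{End}_{G_0}(V)$. Aplicando la parte (1) con $V=W$ al endomorfismo $T-\lambda\cdot \mathrm{id}_V$, este debe ser nulo o un isomorfismo; como su n\'ucleo no es trivial, necesariamente es el operador cero, es decir, $T=\lambda\cdot \mathrm{id}_V$. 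Para concluir $\operatorname{End}_{G_0}(V)\cong \mathbb{C}$, observar\'e que la asignaci\'on $\lambda\mapsto \lambda\cdot \mathrm{id}_V$ es un homomorfismo inyectivo de $\mathbb{C}$-\'algebras $\mathbb{C}\to \operatorname{End}_{G_0}(V)$, y lo reci\'en probado muestra que es suprayectiva.

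El \'unico punto verdaderamente delicado es la invocaci\'on del cierre algebraico de $\mathbb{C}$ en la parte (2): sin \'el no se garantiza la existencia de $\lambda$, y de hecho el enunciado falla sobre $\mathbb{R}$. El resto de los pasos son aplicaciones directas de la irreducibilidad y de la parte (1), de modo que no anticipo obst\'aculos t\'ecnicos adicionales.
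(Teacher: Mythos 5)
Tu propuesta es correcta y sigue esencialmente el mismo camino que la demostraci\'on del art\'iculo: n\'ucleo e imagen como subrepresentaciones para la parte (1), y para la parte (2) la existencia de un valor propio v\'ia el polinomio caracter\'istico seguida de la aplicaci\'on de la parte (1) a $T-\lambda\cdot \mathrm{id}_V$. Los detalles adicionales que incluyes (la verificaci\'on expl\'icita de la equivariancia y el isomorfismo de \'algebras $\mathbb{C}\to \operatorname{End}_{G_0}(V)$) son correctos y simplemente hacen expl\'icito lo que el art\'iculo deja impl\'icito.
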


\begin{proof}
    Sea $T \in \operatorname{Hom}_{G_0}(V,W)$. Entonces $\operatorname{Ker}(T)$ es una subrepresentaci\'on de $V$ y $\operatorname{Im}(T)$ es una subrepresentaci\'on de $W$. Como $V$ y $W$ son irreducibles, entonces $\operatorname{Ker}(T)=V$ y $\operatorname{Im}(T)=0$ \'o $\operatorname{Ker}(T)=0$ y $\operatorname{Im}(T)=W$, es decir, $T=0$ \'o $T$ es un isomorfismo.

Sea $T \in \operatorname{End}_{G_0}(V)$. Puesto que su polinomio caracter\'istico es un polinomio con coeficientes complejos, obtenemos que $T$ tiene al menos un valor propio $\lambda \in \mathbb{C}$. Notemos que $T-\lambda\cdot \mathrm{id}_V$ es un endomorfismo de $V$ que no puede ser un isomorfismo, por lo tanto $T-\lambda\cdot \mathrm{id}_V=0$.
\end{proof}

Notemos que dada una representaci\'on irreducible $\rho \colon G_0\longrightarrow \operatorname{GL}(n)$, la representaci\'on conjugada $\rho^*$ puede ser isomorfa a $\rho$ o no. Cuando $\rho$ es isomorfa a $\rho^*$, el siguiente lema muestra que le podemos asociar un signo lo cual rompe este caso en dos posibilidades. Esto define una partici\'on dentro del conjunto de representaciones irreducibles de $G_0$ en representaciones de tipo real, complejo y cuaterni\'onico.

\begin{lemma}\label{isomorfismos}
    Sea $\rho \colon G_0 \longrightarrow \operatorname{GL}(V)$ una representaci\'on irreducible de $G_0$, $a\in G\backslash G_0$ y $\rho^* \colon G_0 \longrightarrow \operatorname{GL}(aV)$ su representaci\'on conjugada. Entonces ocurre una y solo una de las siguientes dos posibilidades.
    \begin{enumerate}
        \item Las representaciones $\rho$ y $\rho^*$ no son isomorfas.
        \item Existe un isomorfismo de representaciones $T\in \operatorname{Hom}_{G_0}(\rho^*,\rho)$ tal que 
        \begin{equation*}
            T\overline{T}=\pm \rho(a^2).
        \end{equation*} 
    \end{enumerate}
\end{lemma}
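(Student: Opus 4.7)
The two possibilities are clearly mutually exclusive, since (2) already asserts the existence of an isomorphism $\rho^* \cong \rho$. So the real content is: assuming that such an isomorphism exists at all, one can produce one whose composition with its conjugate equals $\pm \rho(a^2)$. My plan is to fix an arbitrary isomorphism $T_0 \in \operatorname{Hom}_{G_0}(\rho^*,\rho)$, show that $T_0 \overline{T_0}$ is forced to be a nonzero \emph{real} scalar multiple of $\rho(a^2)$, and then rescale $T_0$ by a positive constant to absorb $|\lambda|$.

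For the first step I would compute how $T_0 \overline{T_0} \colon V \to V$ intertwines the $G_0$-action. Using the equivariance $\rho(g) T_0 = T_0 \rho^*(g)$ together with $\rho^*(g)(av) = a \rho(a^{-1}ga)(v)$, and then applying the same relation a second time (with $g$ replaced by $a^{-1}ga$), a direct calculation yields
\[ \rho(g) \circ T_0 \overline{T_0} = T_0 \overline{T_0} \circ \rho(a^{-2} g a^{2}). \]
Combined with the elementary identity $\rho(g) \rho(a^2) = \rho(a^2) \rho(a^{-2}ga^2)$ in $G_0$, this shows that $T_0 \overline{T_0} \circ \rho(a^2)^{-1}$ is a genuine $G_0$-equivariant endomorphism of $V$. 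The classical Schur lemma (Lema \ref{lemadeSchurclasico}) then forces $T_0 \overline{T_0} = \lambda \,\rho(a^2)$ for some $\lambda \in \mathbb{C}$, and $\lambda \neq 0$ because $T_0$ is invertible.

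The next step, which I expect to be the most delicate, is to show that $\lambda \in \mathbb{R}$. For this I would work in a basis of $V$ and identify $aV$ with $\mathbb{C}^n$ via $av \leftrightarrow \overline{v}$, so that $\overline{T_0}$ becomes the entry-wise complex conjugate of the matrix $T_0$. Taking entry-wise conjugates of $T_0 \overline{T_0} = \lambda\, \rho(a^2)$ produces $\overline{T_0}\, T_0 = \overline{\lambda}\, \overline{\rho(a^2)}$. Eliminating $\overline{T_0}$ between the two equations yields
\[ \lambda\, T_0^{-1} \rho(a^2) T_0 = \overline{\lambda}\, \overline{\rho(a^2)}. \]
The intertwining relation $T_0 \rho^*(g) = \rho(g) T_0$ specialized to $g = a^2$ (noting $a^{-1} a^2 a = a^2$) gives $T_0^{-1} \rho(a^2) T_0 = \overline{\rho(a^2)}$, so comparing both sides forces $\lambda = \overline{\lambda}$, i.e., $\lambda \in \mathbb{R}^{\times}$.

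Finally, setting $T := |\lambda|^{-1/2} T_0$ preserves the property of being an intertwiner, and since scaling by a complex number $\mu$ replaces $T_0 \overline{T_0}$ by $|\mu|^2 T_0 \overline{T_0}$, we obtain
\[ T \overline{T} = |\lambda|^{-1} T_0 \overline{T_0} = \frac{\lambda}{|\lambda|}\, \rho(a^2) = \pm \rho(a^2), \]
where the sign is that of $\lambda$. The two main subtleties I anticipate are: first, carefully tracking the source and target of each map between $V$ and $aV$ in the double-intertwining computation that produces the twist $a^{-2} g a^{2}$; and second, the matrix-level argument for the reality of $\lambda$, where the identification of $aV$ with $V$ via complex conjugation must be handled with care so that $\overline{T_0}\,T_0$ is correctly recognized as the entry-wise conjugate of $T_0\overline{T_0}$.
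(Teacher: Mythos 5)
Your proposal is correct and follows essentially the same route as the paper's proof: fix an arbitrary intertwiner, show that composing it with its conjugate and with $\rho(a^{-2})$ yields a $G_0$-endomorphism, invoke the classical Schur lemma to get a scalar $c$, prove $c$ is real by conjugating the scalar identity and using the intertwining relation at $a^{\pm 2}$, and rescale by $|c|^{-1/2}$. The only cosmetic difference is that the paper packages the key computation as the relation $\overline{S}\rho(g)=\rho^*(a^2ga^{-2})\overline{S}$ rather than as the twisted commutation of $T_0\overline{T_0}$, but the content is identical.
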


\begin{proof}
Haremos solo el caso $V=\mathbb{C}^n$. El caso general se deduce de este caso escogiendo una base para $V$ y
considerando la definici\'on general de operador conjugado, es decir, la definici\'on \ref{operadorconjugado}.

    Supongamos que existe un isomorfismo $S\in \operatorname{Hom}_{G_0}(\rho^*,\rho)$. Entonces 
    \begin{equation}\label{isomorfismodeirrep}
        \rho^*(g)=S^{-1}\rho(g)S
    \end{equation}
    para todo $g \in G_0$. Usando esta relaci\'on y la ecuaci\'on \ref{conjugadafacil}, obtenemos
    \begin{align}
    \overline{S}\rho(g) & = \mathbb{K}S\mathbb{K}\rho(a^{-1}aga^{-1}a) \nonumber\\
     & = \mathbb{K} S \rho^*(aga^{-1}) \mathbb{K} \nonumber\\
     & = \mathbb{K} \rho(aga^{-1}) S \mathbb{K} \nonumber\\
     & = \mathbb{K} \rho(a^{-1}a^2ga^{-2}a) S \mathbb{K} \nonumber\\
     & = \rho^*(a^2ga^{-2}) \overline{S}.  \label{conjugacion}
    \end{align}
    Veamos que la composici\'on $S\overline{S}\rho(a^{-2})$ es un isomorfismo de la representaci\'on $\rho$:
    \begin{align*}
        S\overline{S}\rho(a^{-2})\circ \rho(g)
        &= S\overline{S}\rho(a^{-2}g) \\
        &= S\rho^*(ga^{-2})\overline{S} \\
        &= \rho(ga^{-2})S\overline{S} \\
        &= \rho(g)\circ\rho(a^{-2})S\overline{S} \\
        &= \rho(g)\circ S\rho^*(a^{-2})\overline{S}  \\
        &= \rho(g) \circ S\overline{S}\rho(a^{-2}),
    \end{align*}
    donde usamos repetidamente las ecuaciones \ref{isomorfismodeirrep} y \ref{conjugacion}. Por el lema \ref{lemadeSchurclasico}, existe $c\in \mathbb{C}\backslash\{0\}$ tal que 
    \begin{equation}\label{morfismoclave}
        S\overline{S}=c \cdot\rho(a^2).
    \end{equation}
    Mostremos que $c$ es un n\'umero real. Se cumple
    \begin{align*} 
        \overline{c}\cdot \mathrm{id} &= \mathbb{K} (c \cdot \mathrm{id}) \mathbb{K} \\
        & = \mathbb{K}S\overline{S}\rho(a^{-2}) \mathbb{K} \\
        &= \overline{S}S\mathbb{K}\rho(a^{-2})\mathbb{K} \\
        &= \overline{S}S \rho^*(a^{-2}) \\
        &= \overline{S}\rho(a^{-2}) S \\
             &= c S^{-1}S 
             \\ &= c\cdot \mathrm{id},
    \end{align*}
    donde en el pen\'ultimo paso usamos la ecuaci\'on \ref{morfismoclave}. Por lo tanto $c\in \mathbb{R}\backslash \{0\}$. Finalmente definimos el morfismo
    \[
       T=\begin{cases}
            \displaystyle \frac{1}{\sqrt{c}}S, & \text{ si } c>0,\\
            & \\ 
            \displaystyle \frac{1}{\sqrt{-c}}S, & \text{ si } c<0, 
        \end{cases}
        \]
    de $\rho$ a $\rho^*$, que satisface la condici\'on deseada.
\end{proof}

Si $T$ es un isomorfismo como en el segundo caso del lema anterior con $T\overline{T} = \pm \rho(a^2)$, notemos que conjugar la ecuaci\'on \ref{induccionconcreta} por la matriz
\begin{equation*}
    \begin{pmatrix}
        \mathrm{id} &0 \\ 0 & T
    \end{pmatrix}
\end{equation*}
la transforma en
\begin{equation}
\label{trascambiobase}
\widehat{\rho}(a) =    \begin{pmatrix}
        0 & \pm T \\ T & 0
    \end{pmatrix} \circ \mathbb{K}.
\end{equation}
donde $\widehat{\rho}$ es la representaci\'on resultante de conjugar $\widetilde{\rho}$ por dicha matriz, y el signo de $\pm T$ coincide con el de $\pm \rho(a^2)$.

El siguiente resultado es el lema de Schur en el contexto de grupos magn\'eticos. Notemos que ahora los endomorfismos de representaciones irreducibles pueden ser m\'as que m\'ultiplos complejos de la identidad.

\begin{lemma}[Lema de Schur]\label{lemadeschur}
    Sean $V,W$ dos representaciones irreducibles de $(G,\phi)$.
    \begin{enumerate}
        \item Si $T\in \operatorname{Hom}_{(G,\phi)}(V,W)$, entonces $T$ es cero o un isomorfismo.
        \item Si $V=W$ y denotamos $V_0=\operatorname{Res}_{G_0}^G(V)$, entonces
        \[\operatorname{End}_{(G,\phi)}(V) \cong \begin{cases}
            \mathbb{R}, & \text{si } V_0 \text{ es irreducible,} \\
            \mathbb{C}, & \text{si } V_0 \text{ no es irreducible y } V_0\not\cong aV_0, \\ \mathbb{H}, & \text{si } V_0 \text{ no es irreducible y } V_0\cong aV_0. 
        \end{cases}\]
    \end{enumerate}
\end{lemma}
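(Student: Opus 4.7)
The plan is to prove Part (1) by mimicking the classical Schur lemma (Lemma \ref{lemadeSchurclasico}), and to prove Part (2) by a case analysis on the $G_0$-module structure of $V_0$. For Part (1), given $T \in \operatorname{Hom}_{(G,\phi)}(V,W)$, both $\operatorname{Ker}(T)$ and $\operatorname{Im}(T)$ are $(G,\phi)$-subrepresentations (the argument is identical to the classical one, since $T$ is linear and intertwines with every element of $G$, whether it acts linearly or antilinearly). Irreducibility of $V$ and $W$ then forces $T=0$ or $T$ to be an isomorphism.

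For Part (2), in Case A where $V_0$ is irreducible, any $T \in \operatorname{End}_{(G,\phi)}(V)$ lies in $\operatorname{End}_{G_0}(V_0) = \mathbb{C} \cdot \mathrm{id}_V$ by Lemma \ref{lemadeSchurclasico}, so $T = \lambda \cdot \mathrm{id}_V$ for some $\lambda \in \mathbb{C}$; commutativity with the antilinear $\rho(a)$ gives $\lambda\rho(a)(v) = \rho(a)(\lambda v) = \overline\lambda\,\rho(a)(v)$, forcing $\lambda \in \mathbb{R}$ and $\operatorname{End}_{(G,\phi)}(V) \cong \mathbb{R}$. For Case B ($V_0$ reducible), I would first establish the structural decomposition $V_0 = W \oplus aW$, where $W \subseteq V_0$ is a $G_0$-irreducible subrepresentation and $aW := \rho(a)(W)$: the subspace $aW$ is $G_0$-stable by the identity $\rho(g)\rho(a) = \rho(a)\rho(a^{-1}ga)$, the sum $W+aW$ is also $\rho(a)$-stable (since $\rho(a)(aW) = \rho(a^2)W \subseteq W$), irreducibility of $V$ forces $V = W+aW$, and $W \cap aW$ is a $G_0$-subrep of the irreducible $W$ that cannot equal $W$ (otherwise $V_0$ would be irreducible), so $W \cap aW = 0$.

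In subcase B1 ($W \not\cong aW$ as $G_0$-reps), classical Schur forces any $G_0$-endomorphism of $V_0$ to be block-diagonal $\lambda \cdot \mathrm{id}_W \oplus \mu \cdot \mathrm{id}_{aW}$; commutativity with the antilinear swap $\rho(a)$ forces $\mu = \overline\lambda$, giving $\operatorname{End}_{(G,\phi)}(V) \cong \mathbb{C}$. In subcase B2 ($W \cong aW$), I would use equation \ref{trascambiobase} to put $\widehat\rho(a)$ in the normalized form $\bigl(\begin{smallmatrix} 0 & \pm T \\ T & 0 \end{smallmatrix}\bigr) \circ \mathbb{K}$, with the sign $-$ (the $+$ case would split $V$ into two $(G,\phi)$-subrepresentations, contradicting irreducibility). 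By classical Schur on $V_0 \cong W \oplus W$, any $G_0$-endomorphism has $2\times 2$ scalar-block form $E = \bigl(\begin{smallmatrix} \alpha & \beta \\ \gamma & \delta \end{smallmatrix}\bigr)$ with $\alpha,\beta,\gamma,\delta \in \mathbb{C}$, and imposing the commutation $EM = M\overline E$ with $M$ the linear part of $\widehat\rho(a)$ yields $\delta = \overline\alpha$ and $\gamma = -\overline\beta$, identifying the resulting four-real-dimensional algebra with the standard complex-matrix model $\bigl\{\bigl(\begin{smallmatrix} \alpha & \beta \\ -\overline\beta & \overline\alpha \end{smallmatrix}\bigr) : \alpha,\beta \in \mathbb{C}\bigr\}$ of $\mathbb{H}$. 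The main obstacle is subcase B2: one must correctly justify the sign normalization and then perform the matrix commutation computation cleanly enough to recognize the resulting algebra as the quaternions.
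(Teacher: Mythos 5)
Your proposal is correct and follows essentially the same route as the paper: part (1) verbatim from the classical case, the real case by forcing the scalar $\lambda$ to satisfy $\lambda=\overline{\lambda}$ against the antilinear $\rho(a)$, the decomposition $V_0=W\oplus aW$ with $V=W+aW$ forced by irreducibility, and in the quaternionic subcase the same sign normalization via equation \ref{trascambiobase} (the paper exhibits the explicit invariant subspace $\{(u,a^{-1}T(au))\}$ for the $+$ sign, which is the justification you flag as needed) followed by the same $2\times 2$ scalar-block commutation computation yielding $\delta=\overline{\alpha}$, $\gamma=-\overline{\beta}$. The only divergence is the subcase $W\not\cong aW$, where you impose commutation with $\rho(a)$ on a block-diagonal endomorphism directly (getting $\mu=\overline{\lambda}$, hence $\mathbb{C}$) while the paper invokes Frobenius reciprocity (Proposici\'on \ref{reciprocidad}) to compute $\operatorname{Hom}_{G_0}(U,U)\times\operatorname{Hom}_{G_0}(U,aU)\cong\mathbb{C}\times\{0\}$; both are valid, and your verification that the sum $W+aW$ is direct is in fact spelled out more carefully than in the paper.
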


\begin{proof}
    El primer inciso se prueba exactamente igual que en el lema \ref{lemadeSchurclasico}. Supongamos que $\operatorname{Res}_{G_0}^G V$ es irreducible y notemos que
    \begin{align*}
        \operatorname{End}_{(G,\phi)}(V) & =  \operatorname{Hom}_{(G,\phi)}(V,V) \\ & \subset \operatorname{Hom}_{G_0}(\operatorname{Res}_{G_0}^G V,\operatorname{Res}_{G_0}^G V) \\ & \cong \mathbb{C},
    \end{align*}
    donde el \'ultimo isomorfismo se tiene por el lema \ref{lemadeSchurclasico}. Es decir, si $T\in \operatorname{End}_{(G,\phi)}(V)$, entonces $T=z\cdot \mathrm{id}$ para cierto $z \in \mathbb{C}$. Cuando consideramos la acci\'on del elemento antilineal $a$, obtenemos
    \[ z\rho(a) = T \rho(a) = \rho(a) T = \rho(a) \circ (z \cdot \mathrm{id}) = \overline{z} \rho(a), \]
    de donde $z\in \mathbb{R}$ y por lo tanto $\operatorname{End}_{(G,\phi)}(V)\cong \mathbb{R}$.
    
    Ahora supongamos que $\operatorname{Res}_{G_0}^G V$ no es irreducible. Sea $U$ una subrepresentaci\'on irreducible de $\operatorname{Res}^{G}_{G_0}V$. Entonces $U\oplus aU\subset V$ es una representaci\'on de $(G,\phi)$, y como $V$ es irreducible, tenemos que
    \begin{equation*}
        U\oplus aU= V.
    \end{equation*}
    Tenemos dos opciones. Si $U\not \cong aU$,  podemos calcular el \'algebra de endomorfismos usando la reciprocidad de Frobenius y el lema de Schur cl\'asico.
    \begin{align*}
        \operatorname{End}_{(G,\phi)}(V)&= \operatorname{Hom}_{(G,\phi)}(U\oplus aU,U\oplus aU) \\ & \cong \operatorname{Hom}_{G_0}(U,\operatorname{Res}_{G_0}^G (U\oplus aU)) \\ & \cong \operatorname{Hom}_{G_0}(U,U) \times \operatorname{Hom}_{G_0}(U,aU) \\ & \cong \mathbb{C} \times \{0\} \\ & \cong \mathbb{C}.  
    \end{align*}
    Por otro lado, si $U\cong aU$, por la proposici\'on \ref{isomorfismos} existe un isomorfismo $T\in \operatorname{Hom}_{G_0}(aU,U)$ tal que $T\overline{T}=\pm\rho(a^2)$. Sin embargo, si $T\overline{T}=\rho(a^2)$, entonces el subespacio vectorial
    \[ W = \{ (u,a^{-1}T(au)) \in V \mid u \in U \} \]
    de $V$ ser\'ia una subrepresentaci\'on propia no trivial, lo que contradice que $V$ sea irreducible. Por lo tanto, se debe tener $T\overline{T}=-\rho(a^2)$. Tras escoger una base de $U$, cualquier elemento de $\operatorname{End}_{(G,\phi)}(V)$ debe tener la forma
    \[ A=\begin{pmatrix}
        x\, \mathrm{id}& y \, \mathrm{id}\\ w \, \mathrm{id}&z\,\mathrm{id} 
    \end{pmatrix} \]
    por el lema de Schur cl\'asico, y debe conmutar con la acci\'on de $a$. Pasando a una representaci\'on isomorfa, podemos usar la ecuaci\'on \ref{trascambiobase}, de donde
    \[ A \rho(a) = \begin{pmatrix}
            x&y\\w&z
        \end{pmatrix} \begin{pmatrix}
            0 &-T\\T&0
        \end{pmatrix}\circ \mathbb{K} = \begin{pmatrix}
            yT&-xT\\zT& -wT
        \end{pmatrix}\circ\mathbb{K}, \]
    \[ \rho(a)A = \begin{pmatrix}
            0 &-T\\T&0
        \end{pmatrix}\circ \mathbb{K} \begin{pmatrix}
            x&y\\w&z
        \end{pmatrix} = \begin{pmatrix}
            -\overline{w}T&-\overline{z}T\\ \overline{x}T& \overline{y}T
        \end{pmatrix}\circ\mathbb{K}. \]
    Entonces
    \begin{equation*}
        y=-\overline{w}, \hspace{1cm} x=\overline{z},
    \end{equation*}
    y por lo tanto 
    \begin{equation*}
        \operatorname{End}_{(G,\phi)}(V)=\left\{\begin{pmatrix}x&y\\-\overline{y} & \overline{x}\end{pmatrix}\, \Bigg| \, x,y\in\mathbb{C}\right\}\cong \mathbb{H}. \qedhere
    \end{equation*}
\end{proof}

Diremos que una representaci\'on $V$ de $(G,\phi)$ es de tipo real, compleja o cuaterni\'onica si $\operatorname{End}_{(G,\phi)}(V)\cong\mathbb{R}^n$, $\mathbb{C}^n$ \'o $\mathbb{H}^n$, respectivamente para alg\'un $n\in \mathbb{N}$. 

\begin{corollary}\label{criteriodeirrep}
    Sea $V$ una representaci\'on de $(G,\phi)$. Entonces $V$ es una representaci\'on irreducible de tipo real, complejo o cuaterni\'onico si y solo si $\operatorname{End}_{(G,\phi)}(V) \cong \mathbb{R}$, $\mathbb{C}$ \'o $\mathbb{H}$, respectivamente. 
\end{corollary}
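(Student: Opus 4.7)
Mi plan consiste en combinar el corolario \ref{descomenirreps} (descomposici\'on en irreducibles) con el lema \ref{lemadeschur} (lema de Schur magn\'etico) para reducir ambas direcciones al estudio de \'algebras de la forma $\bigoplus_i M_{n_i}(D_i)$ con $D_i\in\{\mathbb{R},\mathbb{C},\mathbb{H}\}$.

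La direcci\'on directa es inmediata del lema \ref{lemadeschur}(2): si $V$ es irreducible de tipo real (resp. complejo, cuaterni\'onico), entonces dicho lema da $\operatorname{End}_{(G,\phi)}(V)\in\{\mathbb{R},\mathbb{C},\mathbb{H}\}$; combinado con que esta \'algebra tambi\'en tiene la forma $\mathbb{R}^n$, $\mathbb{C}^n$ o $\mathbb{H}^n$ seg\'un el tipo, forzosamente $n=1$ y se obtiene el isomorfismo buscado.

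Para el rec\'iproco descompongo $V=\bigoplus_i V_i^{n_i}$ en irreducibles dos a dos no isomorfos usando el corolario \ref{descomenirreps}. El lema \ref{lemadeschur}(1) garantiza $\operatorname{Hom}_{(G,\phi)}(V_i,V_j)=0$ cuando $V_i\not\cong V_j$, y el lema \ref{lemadeschur}(2) que $D_i:=\operatorname{End}_{(G,\phi)}(V_i)\in\{\mathbb{R},\mathbb{C},\mathbb{H}\}$. Combinando ambos hechos se obtiene
\[ \operatorname{End}_{(G,\phi)}(V)\cong\bigoplus_i M_{n_i}(D_i). \]
Si esta \'algebra es isomorfa a $\mathbb{R}$, $\mathbb{C}$ o $\mathbb{H}$, en particular es un \'algebra de divisi\'on sin divisores de cero ni idempotentes no triviales. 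Esto es incompatible con tener m\'as de un sumando en la descomposici\'on (que producir\'ia idempotentes distintos de $0$ y $1$) o con $n_i\geq 2$ (que introducir\'ia matrices no nulas con producto cero). Se concluye que $V=V_1$ es irreducible y que $D_1$ coincide con el \'algebra dada, lo cual fija el tipo.

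El punto m\'as t\'ecnico es justificar con cuidado la identificaci\'on $\operatorname{End}_{(G,\phi)}(V_i^{n_i})\cong M_{n_i}(D_i)$ cuando $D_i=\mathbb{H}$, por la no conmutatividad y el paso al \'algebra opuesta; no obstante, como $\mathbb{R}^{\mathrm{op}}\cong\mathbb{R}$, $\mathbb{C}^{\mathrm{op}}\cong\mathbb{C}$ (v\'ia conjugaci\'on compleja) y $\mathbb{H}^{\mathrm{op}}\cong\mathbb{H}$ (v\'ia conjugaci\'on cuaterni\'onica), la clase del \'algebra resultante (real, compleja o cuaterni\'onica) no se ve afectada y la argumentaci\'on queda intacta.
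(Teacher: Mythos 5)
Tu demostraci\'on es correcta y sigue en esencia la misma ruta que el art\'iculo: la direcci\'on directa se obtiene del lema \ref{lemadeschur}, y la rec\'iproca descomponiendo $V$ en irreducibles mediante el corolario \ref{descomenirreps} y calculando el \'algebra de endomorfismos. La \'unica diferencia sustancial est\'a en ese c\'alculo: el art\'iculo escribe $\operatorname{End}_{(G,\phi)}(V)\cong\mathbb{R}^{\alpha}\times\mathbb{C}^{\beta}\times\mathbb{H}^{\gamma}$, lo cual solo es exacto cuando todas las multiplicidades valen a lo m\'as uno, mientras que t\'u identificas correctamente $\operatorname{End}_{(G,\phi)}(V_i^{\oplus n_i})\cong M_{n_i}(D_i)$ y descartas los casos $n_i\geq 2$ o con m\'as de un sumando observando que un \'algebra de divisi\'on no tiene divisores de cero ni idempotentes no triviales. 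Ambas versiones llegan a la misma conclusi\'on, pues ni $M_n(D)$ con $n\geq 2$ ni un producto de dos o m\'as factores no nulos es un \'algebra de divisi\'on, pero la tuya es la formulaci\'on precisa; tu observaci\'on sobre el \'algebra opuesta, aunque no afecta el resultado porque $\mathbb{R}$, $\mathbb{C}$ y $\mathbb{H}$ son isomorfas a sus opuestas, tambi\'en es un cuidado que el art\'iculo omite.
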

\begin{proof}
    El lema de Schur para grupos magn\'eticos (lema \ref{lemadeschur}) nos garantiza la necesidad. Para la suficiencia tomemos una representaci\'on $V$ tal que $\operatorname{End}_{(G,\phi)}(V) \cong \mathbb{R}$, $\mathbb{C}$ \'o $\mathbb{H}$. Por el corolario \ref{descomenirreps}, podemos descomponer $V$ como suma de representaciones irreducibles y de nuevo por el lema \ref{lemadeschur}, esta suma se divide en representaciones irreducibles de tipo real, complejo o cuaterni\'onico, es decir,
    \begin{equation*}
        V\cong \left(\oplus_i A_i^{\oplus\alpha_i}\right) \oplus \left(\oplus_j B_j^{\oplus\beta_j}\right)\oplus \left(\oplus_k C_k^{\oplus\gamma_k}\right), 
    \end{equation*}
    con $A_i$, $B_j$, $C_k$ representaciones irreducibles no isomorfas de tipo real, complejo o cuaterni\'onico, respectivamente. Tomando endomorfismos de la descomposici\'on anterior, obtenemos
    \begin{equation*}
        \operatorname{End}_{(G,\phi)}(V) \cong \mathbb{R}^{\alpha} \times\mathbb{C}^{\beta} \times \mathbb{H}^{\gamma},
    \end{equation*}
    donde $\alpha$ es la suma de los $\alpha_i$ y similarmente para $\beta$ y $\gamma$. Por lo tanto debe existir un \'unico \'indice $i$, $j$ \'o $k$ tal que alguna de las multiplicidades $\alpha_i$, $\beta_j$ \'o $\gamma_k$ es igual a uno y todas las dem\'as multiplicidades son cero.
\end{proof}

\section{Clasificaci\'on de representaciones irreducibles}

Cada uno de los tres diferentes tipos de representaciones irreducibles de $G_0$ da lugar a una representaci\'on irreducible de $(G,\phi)$, y viceversa, toda representaci\'on irreducible de $(G,\phi)$ proviene de una representaci\'on irreducible de $G_0$. La primera demostraci\'on de este hecho se debe a Wigner y se puede encontrar en \cite{wigner}. 

Diremos que una representaci\'on irreducible $\rho$ de $G_0$ es de tipo compleja si $\rho$ no es isomorfa a $\rho^*$. 
Diremos que es de tipo real si existe un isomorfismo $T \colon \rho^* \to \rho$ tal que $T\overline{T}=\rho(a^2)$, y de tipo cuaterni\'onica si existe un isomorfismo $T \colon \rho^* \to \rho$ tal que $T\overline{T}=-\rho(a^2)$. Por el lema \ref{isomorfismos}, cualquier representaci\'on irreducible de $G_0$ debe cumplir una de estas tres alternativas.

\begin{theorem}[Clasificaci\'on de Wigner]\label{claswigner}
    Sean $(G,\phi)$ un grupo magn\'etico, $a\in G\backslash G_0$ y $\rho \colon G_0 \longrightarrow \operatorname{GL}(n)$ una representaci\'on irreducible. Denotemos por $\rho^*$ a su representaci\'on conjugada.

    \begin{itemize}
        \item Si $\rho$ es de tipo real y \begin{equation*}
            T \colon \rho^*\longrightarrow \rho
        \end{equation*}
        es un isomorfismo de representaciones tal que $T\overline{T}=\rho(a^2)$, entonces produce la representaci\'on irreducible
        \begin{equation}\label{realrep}
            g\cdot v = \begin{cases}
                \rho(g)v, & \text{ si } g\in G_0,
                \\ T\circ\mathbb{K}(v), & \text{ si } g=a,
            \end{cases}
        \end{equation}
        de $(G,\phi)$ sobre $\mathbb{C}^n$.
        \item Si $\rho$ es de tipo complejo, entonces $\operatorname{Ind}^{G}_{G_0}\rho$ es una representaci\'on irreducible de $(G,\phi)$.
        \item Si $\rho$ es de tipo cuaterni\'onico y 
        \begin{equation*}
            T \colon \rho^*\longrightarrow \rho
        \end{equation*}
        es un isomorfismo de representaciones 
        tal que $T\overline{T}=-\rho(a^2)$, entonces $\operatorname{Ind}^{G}_{G_0}\rho$ es una representaci\'on irreducible de $(G,\phi)$, que tras el cambio de base por la matrix $\begin{pmatrix}
            \mathrm{id} & 0\\ 0 & T
        \end{pmatrix}$ tiene la forma
        \begin{align*}
        G & \longrightarrow \textbf{GL}(2n) \\
        g & \mapsto \begin{cases}
                \begin{pmatrix}
                \rho(g) &0 \\ 0 & \rho(g)
            \end{pmatrix} , & \text{ si } g\in G_0,
                \\
                & \\ \begin{pmatrix} 0 & -T \\ T & 0
            \end{pmatrix} \circ \mathbb{K}, & \text{ si } g=a.
            \end{cases}
        \end{align*} 
    \end{itemize}
    M\'as a\'un, toda representaci\'on irreducible de $(G,\phi)$ es de una de estas formas. 
\end{theorem}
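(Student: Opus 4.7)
El plan es dividir el teorema en dos direcciones: (a) cada una de las tres construcciones produce una representaci\'on irreducible de $(G,\phi)$, y (b) toda representaci\'on irreducible de $(G,\phi)$ se obtiene por una de ellas. Para (a), empezar\'iamos verificando que cada f\'ormula define efectivamente un homomorfismo, usando el criterio de la Observaci\'on \ref{homomorfismofacil}. En el caso real con $h = T\circ\mathbb{K}$, las dos condiciones son $h^2 = \rho(a^2)$, que equivale a $T\overline{T}=\rho(a^2)$, y $h^{-1}\rho(g)h = \rho(a^{-1}ga)$, que por la ecuaci\'on \ref{conjugadafacil} equivale a que $T\colon\rho^*\to\rho$ sea un morfismo $G_0$-equivariante; ambas se satisfacen por hip\'otesis. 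Para los casos complejo y cuaterni\'onico, la Proposici\'on \ref{repinducida} y la ecuaci\'on \ref{trascambiobase} garantizan que la inducida est\'a bien definida y adopta la forma indicada tras el cambio de base por $\begin{pmatrix}\mathrm{id} & 0 \\ 0 & T\end{pmatrix}$.

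A continuaci\'on verificar\'iamos la irreducibilidad. El caso real es inmediato: como la restricci\'on a $G_0$ ya es irreducible, cualquier $(G,\phi)$-subrepresentaci\'on es $0$ o el total. En el caso complejo, la restricci\'on se descompone como $\rho\oplus\rho^*$ con sumandos no isomorfos, por lo que el Lema \ref{lemadeSchurclasico} limita las $G_0$-subrepresentaciones a $0$, $V$, $aV$ y $V\oplus aV$; como la acci\'on de $a$ intercambia $V$ con $aV$, s\'olo $0$ y el total son invariantes. En el caso cuaterni\'onico, tras el cambio de base la restricci\'on es $\rho\oplus\rho$ y los $G_0$-subm\'odulos isomorfos a $\rho$ forman una familia $W_\lambda = \{(u,\lambda u) : u\in V\}$ parametrizada por $\mathbb{CP}^1$. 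Un c\'alculo directo con la matriz de la ecuaci\'on \ref{trascambiobase} muestra que $a$ env\'ia $W_\lambda$ a $W_{-1/\bar\lambda}$, y como $\lambda = -1/\bar\lambda$ implica $|\lambda|^2 = -1$, esta acci\'on no tiene puntos fijos, lo que descarta toda subrepresentaci\'on propia no trivial.

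Para la direcci\'on (b), sea $V$ una representaci\'on irreducible de $(G,\phi)$ y sea $V_0 = \operatorname{Res}^G_{G_0}V$. Si $V_0$ es irreducible, identificamos $V_0\cong\mathbb{C}^n$ y escribimos la acci\'on antilineal de $a$ como $T\circ\mathbb{K}$; las relaciones de grupo imponen $T\overline{T}=\rho(a^2)$ y que $T\colon\rho^*\to\rho$ sea un isomorfismo, as\'i que $\rho=V_0$ es de tipo real y $V$ coincide con la construcci\'on \ref{realrep}. Si $V_0$ no es irreducible, escogemos una subrepresentaci\'on $G_0$-irreducible $U\subset V_0$: entonces $U+aU$ es $(G,\phi)$-estable y coincide con $V$, y $U\cap aU$ debe ser $0$ (en caso contrario $V=U$ ser\'ia $G_0$-irreducible), de modo que $V=U\oplus aU$ como $G_0$-m\'odulo. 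Cuando $U\not\cong aU$, la reciprocidad de Frobenius (Proposici\'on \ref{reciprocidad}) da
\[ \operatorname{Hom}_{(G,\phi)}(\operatorname{Ind}^G_{G_0}U, V) \cong \operatorname{Hom}_{G_0}(U, U\oplus aU) \cong \mathbb{C}, \]
y cualquier generador es no nulo, por tanto un isomorfismo por el Lema \ref{lemadeschur}. Cuando $U\cong aU$, el Lema \ref{isomorfismos} produce $T$ con $T\overline{T}=\pm\rho(a^2)$; el mismo argumento con el subespacio $\{(u,a^{-1}T(au))\}$ usado en la prueba del Lema \ref{lemadeschur} fuerza el signo negativo, y un argumento an\'alogo v\'ia Frobenius concluye que $V\cong\operatorname{Ind}^G_{G_0}U$.

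La parte m\'as delicada ser\'a el seguimiento cuidadoso de signos y cambios de base en el caso cuaterni\'onico, tanto en la verificaci\'on de que $a$ no fija ning\'un $W_\lambda$ como en la obtenci\'on expl\'icita de la matriz del enunciado. Afortunadamente, el trabajo t\'ecnico esencial ya est\'a contenido en los resultados previos: la dicotom\'ia de signos en el Lema \ref{isomorfismos}, la descripci\'on concreta de $\operatorname{Ind}^G_{G_0}$ dada por la Proposici\'on \ref{repinducida} y el cambio de base que conduce a la ecuaci\'on \ref{trascambiobase}. Por ello, el teorema combinar\'a esos ingredientes con el lema cl\'asico de Schur para completar la clasificaci\'on.
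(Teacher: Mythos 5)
Tu propuesta es correcta, pero la ruta que sigues para la irreducibilidad difiere genuinamente de la del art\'iculo. En el texto, los casos complejo y cuaterni\'onico se tratan calculando el \'algebra de endomorfismos de $\operatorname{Ind}^G_{G_0}\rho$ mediante la reciprocidad de Frobenius (obteniendo $\mathbb{C}$ y, reutilizando la \'ultima parte de la demostraci\'on del lema \ref{lemadeschur}, $\mathbb{H}$) y aplicando despu\'es el corolario \ref{criteriodeirrep}; t\'u en cambio analizas directamente los subespacios invariantes: en el caso complejo $a$ intercambia los dos sumandos no isomorfos, y en el cuaterni\'onico $a$ act\'ua sin puntos fijos sobre la familia de copias de $\rho$ dentro de $\rho\oplus\rho$ parametrizada por $\mathbb{C}P^1$ (solo cuida de incluir el punto $W_\infty=\{(0,u)\}$, que se intercambia con $W_0$, de modo que nada se rompe). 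Tu argumento es m\'as elemental y autocontenido; el del art\'iculo tiene la ventaja de identificar de paso el tipo (real, complejo o cuaterni\'onico) de la representaci\'on resultante, informaci\'on que alimenta al funtor $W_G$ y a los ejemplos posteriores. Para la direcci\'on rec\'iproca el art\'iculo se limita a remitir a la demostraci\'on del lema \ref{lemadeschur}, mientras que t\'u explicitas el isomorfismo $V\cong\operatorname{Ind}^G_{G_0}U$ v\'ia Frobenius y el lema de Schur, lo cual es m\'as detallado que lo escrito. Los dem\'as ingredientes (la observaci\'on \ref{homomorfismofacil} para verificar que las f\'ormulas definen homomorfismos, la proposici\'on \ref{repinducida} y la ecuaci\'on \ref{trascambiobase} para la forma expl\'icita) coinciden con los del texto.
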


\begin{proof}
    Supongamos que $\rho$ y $\rho^*$ no son isomorfas. Calculemos el \'algebra de endomorfismos de la representaci\'on $\operatorname{Ind}^{G}_{G_0}\rho$ usando la reciprocidad de Frobenius. Se tiene
    \begin{align*}
        \operatorname{End}_{(G,\phi)}(\operatorname{Ind}^{G}_{G_0}\rho) & =   \operatorname{Hom}_{(G,\phi)}(\operatorname{Ind}^{G}_{G_0}\rho,\operatorname{Ind}^{G}_{G_0}\rho) \\ & \cong \operatorname{Hom}_{G_0}(\rho, \operatorname{Res}_{G_0}^G \operatorname{Ind}^{G}_{G_0}\rho) \\ & \cong \operatorname{Hom}_{G_0}(\rho, \rho\oplus \rho^*)  \\ & \cong \operatorname{Hom}_{G_0}(\rho, \rho)\times \operatorname{Hom}_{G_0}(\rho, \rho^*) \\ & \cong \mathbb{C}, 
    \end{align*}
    donde el antepen\'ultimo isomorfismo se tiene por la proposici\'on \ref{repinducida}. Por el corolario \ref{criteriodeirrep}, concluimos que $\operatorname{Ind}^G_{G_0} \rho$ es irreducible de tipo complejo.
    
    Si existe un isomorfismo $T \colon \rho^*\longrightarrow \rho$ tal que $T\overline{T}=\rho(a^2)$, entonces la ecuaci\'on \ref{realrep} define una representaci\'on de $(G,\phi)$ por la observaci\'on \ref{homomorfismofacil}. M\'as a\'un, en esta ecuaci\'on vemos que su restricci\'on a $G_0$ es $\rho$, que es irreducible, as\'i que define una representaci\'on irreducible de $(G,\phi)$. Por el lema \ref{lemadeschur}, es de tipo real.
    
    Si existe un isomorfismo $T \colon \rho^* \longrightarrow \rho$ tal que $T\overline{T}=-\rho(a^2)$, la \'ultima parte de la demostraci\'on del lema de Schur (lema \ref{lemadeschur}), muestra que $\operatorname{End}_{(G,\phi)}(\operatorname{Ind}^{G}_{G_0}\rho) \cong \mathbb{H}$. Por el corolario \ref{criteriodeirrep}, es una representaci\'on irreducible de $(G,\phi)$ de tipo cuaterni\'onico. La forma expl\'icita en $g=a$ tambi\'en se vio en esa demostraci\'on, mientras que la forma expl\'icita en $G_0$ se debe a que 
    \[ \operatorname{Res}_{G_0}^G \operatorname{Ind}_{G_0}^G \rho \cong \rho \oplus \rho^* \cong \rho \oplus \rho, \]
    donde el primer isomorfismo se tiene por la Proposici\'on \ref{repinducida} y el segundo porque $\rho \cong \rho^*$.

    Finalmente, si $\alpha$ es una representaci\'on irreducible de $(G,\phi)$, entonces la demostraci\'on del lema de Schur para grupos magn\'eticos (lema \ref{lemadeschur}) muestra que $\alpha$ se construye a partir de las representaciones irreducibles en $\operatorname{Res}_{G_0}^G \alpha$. 
\end{proof}

El teorema de clasificaci\'on de Wigner define un funtor
\begin{align*}
    W_G \colon \operatorname{Rep}(G_0)& \longrightarrow \textbf{Rep}(G,\phi), \\ \rho & \longmapsto W_G(\rho),
\end{align*}
que manda representaciones irreducibles de $G_0$ a representaciones irreducibles de $(G,\phi)$.

\section{Ejemplos}

En esta secci\'on determinaremos las representaciones irreducibles de los grupos magn\'eticos $(\mathbb{Z}/2n,\operatorname{mod}2)$, $(D_n=\mathbb{Z}/n\rtimes\mathbb{Z}/2, \pi_2)$, $(Q_8, Q_8\overset{p}{\longrightarrow}Q_8/\langle i\rangle)$.

\subsection{ \texorpdfstring{$(\mathbb{Z}/2n,\operatorname{mod}2)$}{TEXT} }
Notemos que en este caso solo consideramos grupos c\'iclicos de orden par puesto que los de orden impar solo tienen homomorfismos triviales a $\mathbb{Z}/2$. El grupo magn\'etico $(\mathbb{Z}/2,\operatorname{mod}2)$ genera la siguiente sucesi\'on exacta corta:
\begin{equation*}
    \xymatrix{0 \ar[r] & G_0=\mathbb{Z}/n \ar[r]^{\times 2} & G=\mathbb{Z}/2n \ar[rr]^{\hspace{0.5cm}\phi=\operatorname{mod}2} && \mathbb{Z}/2 \ar[r] &0. }
\end{equation*}
Elegimos $a=1\in \mathbb{Z}/2n$. El grupo $G_0=\mathbb{Z}/n$ tiene $n$ representaciones irreducibles dadas por
\begin{align*}
    \chi_{l} \colon \mathbb{Z}/n& \longrightarrow \operatorname{GL}(1) \cong \mathbb{C}^{\times}, \\ 1 & \longmapsto \exp \frac{2\pi i l}{n}, 
\end{align*}
para $l=0,\ldots,n-1$. Usaremos la notaci\'on $\exp x$ para $e^x$ porque estaremos manipulando exponentes fraccionarios. Aqu\'i debemos distinguir dos casos:

\begin{itemize}
    \item Si $n$ es par, aparecen los tres tipos de representaciones. Para ver esto calculemos la representaci\'on asociada en los casos $l=0$, $l=n/2$ y $l\not= 0,n/2$.
    
    Cuando $l=0$, la representaci\'on conjugada est\'a dada por 
    \[ \chi^*_0(g) =  \overline{\chi}_0(g) = 1 =  \chi_0(g), \]
        as\'i que $\chi_0^*$ es isomorfa a $\chi_0$ con isomorfismo $T=1$. M\'as a\'un, $T\overline{T}=\chi_0(a^2)$, por lo que $\chi_0$ es de tipo real y genera la representaci\'on irreducible 
        \begin{align*}
        W_{\mathbb{Z}/2n}(\chi_0) \colon (\mathbb{Z}/2n,\operatorname{mod}2) & \longrightarrow \textbf{GL}(\mathbb{C}), \\ l & \longmapsto \mathbb{K}^{l}.
        \end{align*}
        de $(\mathbb{Z}/2n,\operatorname{mod}2)$.
        
    Si $l=n/2$, calculemos la representaci\'on conjugada 
    \[ \chi^*_{n/2}(g) =  \overline{\left( \exp \frac{2\pi i n/2}{n} \right)^g} =  (-1)^g =  \chi_{n/2}(g). \]
        Por lo tanto $\chi^*_{n/2}$ y $\chi_{n/2}$ son isomorfas con isomorfismo $T=\mathrm{id}$. Adem\'as se tiene que 
        \begin{equation*}
            T\overline{T}=-\chi_{n/2}(a^2),
        \end{equation*}
        por lo que $\chi_{n/2}$ es de tipo cuaterni\'onico. Entonces $\chi_{n/2}$ genera la representaci\'on irreducible de $(\mathbb{Z}/2n,\operatorname{mod}2)$
        dada por
        \begin{align*}
            W_{\mathbb{Z}/2n}(\chi_{n/2}) \colon (\mathbb{Z}/2n,\operatorname{mod}2) & \longrightarrow \textbf{GL}(\mathbb{C}^2), \\ 1 & \longmapsto \begin{pmatrix}
                0 & -1 \\ 1& 0 
            \end{pmatrix}\circ \mathbb{K}.
        \end{align*}
    
    Finalmente, si $l\not = 0, n/2$, calculemos la representaci\'on conjugada
    \[ \chi^*_l(g)= \overline{\left(\exp{\frac{2\pi i l}{n}}\right)^g} = \left(\exp{\frac{2\pi i(n-l)}{n}}\right)^g = \chi_{n-l}(g). \]
        Como $\chi_l$ y $\chi_{n-l}$ no son isomorfas, entonces $\chi_l$ genera la siguiente representaci\'on irreducible 
        \begin{align*}
            W_{\mathbb{Z}/2n}(\chi_l) \colon (\mathbb{Z}/2n\operatorname{mod}2), & \longrightarrow \textbf{GL}(\mathbb{C}^2), \\ 1 & \longmapsto \begin{pmatrix}
                0& \exp{\frac{2\pi i}{n}} \\ 1 & 0
            \end{pmatrix}\circ\mathbb{K}.
        \end{align*}
de $(\mathbb{Z}/2n,\operatorname{mod}2)$. \newline
    \item Si $n$ es impar, tenemos solo el primer y tercer tipo de representaciones que aparecen en el caso anterior.
\end{itemize}

\subsection{\texorpdfstring{$(D_n=\mathbb{Z}/n\rtimes\mathbb{Z}/2, \pi_2)$}{TEXT}}

En la descripci\'on de $D_n$ como producto semidirecto, el subgrupo $\mathbb{Z}/n$ es el generado por las rotaciones y $\mathbb{Z}/2$ el generado por una reflexi\'on fija. La acci\'on del elemento no trivial $1\in\mathbb{Z}/2$ en $\mathbb{Z}/n$ est\'a dada por
\begin{equation*}
    1\cdot l = -l=n-l.
\end{equation*}
Este grupo magn\'etico produce la sucesi\'on exacta corta
    \[ 1 \longrightarrow G_0=\mathbb{Z}/n \longrightarrow G  = \mathbb{Z}/n\rtimes \mathbb{Z}/2 \stackrel{\pi_2}{\longrightarrow} \mathbb{Z}/2 \longrightarrow 1 \]
Tomemos $a=(0,1)$. En este caso, $G_0=\mathbb{Z}/n$ es el mismo grupo que en el ejemplo anterior. Calculemos las representaciones asociadas.
\begin{align*}
    \chi^*_l(g) & = \overline{\chi}_l(a^{-1}ga) \\ & = \overline{\left(\exp{\frac{2\pi i l}{n}} \right)^{n-g}} \\ & =  \exp{\frac{2\pi il(g-n)}{n}} \\ & =  \left(\exp{\frac{2\pi il}{n}}\right)^g \\ & =  \chi_l(g).
\end{align*}
Puesto que $\chi_l\cong \chi^*_l$ y el isomorfismo es $T=\mathrm{id}$, tenemos
\begin{equation*}
    T\overline{T}=1=\chi_l(0)=\chi_l(a^2).    
\end{equation*}
Por lo tanto todas las representaciones $\chi_l$ son de tipo real y generan las $n$ representaciones irreducibles de $(D_n,\pi_2)$ dadas por
\begin{align*}
    W_{D_n}(\chi_l) \colon (D_n,\pi_2) & \longrightarrow \textbf{GL}(\mathbb{C}), \\ (n,\varepsilon)& \longmapsto \chi_l(n)\circ\mathbb{K}^{\varepsilon},
\end{align*}
para $0 \leq l \leq n-1$.

\subsection{\texorpdfstring{$(Q_8,\overset{\hspace{0.5cm}q}{Q_8\longrightarrow} Q_8/\langle i\rangle)$}{TEXT}}

El grupo de los cuaterniones $Q_8=\{1,-1,i,-i,j,-j,k,-k\}$ contiene al subgrupo normal $G_0=\langle i\rangle\cong \mathbb{Z}/4$. El cociente de $Q_8$ por $G_0$ tiene orden dos, por lo cual podemos considerar al grupo magn\'etico $(Q_8,q)$, donde $q$ es la composici\'on del cociente $Q_8 \to Q_8/\langle i \rangle$ con el isomorfismo $Q_8/\langle i \rangle \to \mathbb{Z}/2$. Esto determina la sucesi\'on exacta
    \[ 1 \longrightarrow G_0=\langle i \rangle \longrightarrow G  = Q_8 \stackrel{q}{\longrightarrow} \mathbb{Z}/2 \longrightarrow 1. \]
Tomemos $a=j$ y calculemos las representaciones conjugadas.
\begin{align*}
    \chi^*_l(g) & =  \overline{\left(\exp{\frac{2\pi i l}{4}}\right)^{-jgj}} \\ & =  \overline{\left(i^l\right)^{-jgj}} \\ & =  \begin{cases}
       1 & \text{ si } l=0\\ \overline{i^{g^{-1}}} & \text{ si }  l=1\\ (-1)^{g^{-1}} & \text{ si }  l=2\\ \overline{(-i)^{g^{-1}}} & \text{ si }  l=3
    \end{cases} \\ & =  \begin{cases}
       1 & \text{ si }  l=0\\ i^{g} & \text{ si }  l=1\\ (-1)^{g} & \text{ si }  l=2\\ (-i)^{g} & \text{ si }  l=3
    \end{cases} \\ & = \chi_l(g).
\end{align*}
Por lo tanto la representaci\'on $\chi^*_l$ es isomorfa a $\chi_l$ con isomorfismo $T=\mathrm{id}$. Tenemos que
\begin{equation*}
    T\overline{T}=(-1)^{l}\chi_l(a^2),
\end{equation*}
por lo que $\chi_0$ y $\chi_2$ son reales, mientras que $\chi_1$ y $\chi_3$ son cuaterni\'onicas. Las dos primeras representaciones generan las representaciones irreducibles
\begin{align*}
    W_{Q_8}(\chi_0) \colon (Q_8,q) & \longrightarrow \textbf{GL}(\mathbb{C}), \\ i & \longmapsto 1, \\ j & \longmapsto \mathbb{K},
\end{align*}

\begin{align*}
    W_{Q_8}(\chi_2) \colon (Q_8,q) & \longrightarrow \textbf{GL}(\mathbb{C}), \\ i & \longmapsto -1, \\ j & \longmapsto \mathbb{K}.
\end{align*}
Las otras dos representaciones generan las siguientes representaciones irreducibles:
\begin{align*}
    W_{Q_8}(\chi_1) \colon (Q_8,q) & \longrightarrow \textbf{GL}(\mathbb{C}^2), \\ i & \longmapsto \begin{pmatrix}
        i & 0\\ 0 & i
    \end{pmatrix}, \\ j & \longmapsto \begin{pmatrix}
        0 & -1 \\ 1 & 0
    \end{pmatrix}\mathbb{K},
\end{align*}

\begin{align*}
    W_{Q_8}(\chi_3) \colon (Q_8,q) & \longrightarrow \textbf{GL}(\mathbb{C}^2), \\ i & \longmapsto \begin{pmatrix}
        -i & 0\\ 0 & -i
    \end{pmatrix}, \\ j & \longmapsto \begin{pmatrix}
        0 & -1 \\ 1 & 0
    \end{pmatrix}\mathbb{K}.
\end{align*}

\section{Comentarios finales}

La teor\'ia de representaciones de grupos magn\'eticos que presentamos aqu\'i tiene m\'ultiples expansiones y generalizaciones. Por ejemplo, ?`por qu\'e fijarnos solamente en el grupo de Galois de la extensi\'on $\mathbb{C}/\mathbb{R}$? Si nos olvidamos de la aplicaci\'on a la f\'isica, entonces podemos considerar extensiones de Galois generales $F/k$ y trabajar con la teor\'ia de representaciones de grupos $(G, \phi \colon G\to \mathrm{Aut}_{k}(F))$ donde $G$ es un grupo y $\phi$ es un homomorfismo al grupo de automorfismos de la extensi\'on de campos. En esta direcci\'on se puede consultar \cite{Gille_Szamuely_2006}. 

Otra expansi\'on de la teor\'ia de representaciones magn\'eticas y que ha estado activa en los \'ultimos a\~nos es la teor\'ia $K$ equivariante. Hasta ahora solo hemos considerado espacios vectoriales aislados con acciones de un grupo magn\'etico, pero ?`qu\'e pasa si tenemos una familia de estos espacios parametrizada por un espacio topol\'ogico compacto Hausdorff con una acci\'on del grupo? En este contexto, los objetos con los que se trabajan son los haces vectoriales equivariantes. Dichos objetos tambi\'en son usados en materia condensada para modelar fases topol\'ogicas de la materia, el lector interesado puede consultar \cite{FreedMoore2013}, \cite{Gomi2023} y \cite{UribeSerranoMerino2025}.

Para quienes deseen ver estas ideas en acci\'on, en su entorno natural, la f\'isica de la materia condensada, pueden consultar los art\'iculos \cite{PhysRevB.110.125129}, \cite{s57q-q7gt}, \cite{PhysRevB.111.085147}  y las referencias ah\'i contenidas. Cabe se\~nalar que en dichos trabajos, los grupos magn\'eticos y sus representaciones constituyen una herramienta subyacente. Aunque no siempre aparezcan de manera expl\'icita, muchas de las construcciones te\'oricas dependen crucialmente de que los materiales considerados exhiban estas simetr\'ias.

Para finalizar y como nota hist\'orica, en los primeros a\~nos de la mec\'anica cu\'antica, la teor\'ia de grupos y sus representaciones fue recibida con escepticismo en este campo. Algunos f\'isicos, como Erwin Schr\"odinger, 
llegaron a llamarla despectivamente ``Gruppenpest'' (la ``peste'' de la teor\'ia de grupos), al considerarla demasiado formal y alejada de los problemas pr\'acticos de la f\'isica. Con el tiempo, sin embargo, qued\'o claro que 
este lenguaje matem\'atico es esencial para entender las simetr\'ias de los sistemas y la estructura de las part\'iculas elementales.

\bibliographystyle{amsplain}
\bibliography{mybibfile.bib}

\end{document}